\newcommand{\bea}{\begin{eqnarray}}
\newcommand{\eea}{\end{eqnarray}}
\newcommand{\bna}{\begin{eqnarray*}}
\newcommand{\ena}{\end{eqnarray*}}
\numberwithin{equation}{section}
\theoremstyle{plain}
\newtheorem{lemma}{Lemma}[section]
\newtheorem{theorem}[lemma]{Theorem}
\newtheorem{proposition}[lemma]{Proposition}
\theoremstyle{definition}
\newtheorem{remark}{Remark}
\renewcommand{\Re}{\operatorname{Re}}
\renewcommand{\Im}{\operatorname{Im}}
\renewcommand{\bmod}[1]{\,(\mathrm{mod\,}{#1})}
\title[]
{On a level analog of Selberg's result on $S(t)$}
\author{Qingfeng Sun}
	\address{School of Mathematics and Statistics, Shandong University, Weihai\\Weihai, 264209, China}
	\email{qfsun@sdu.edu.cn}
\author{Hui Wang}
 \address{Department of Mathematics, The University of Hong Kong, Pokfulam Road, Hong Kong}
    \email{wh0315@mail.sdu.edu.cn}
\subjclass[2020]{11F12, 11F66, 11F72.}
\keywords{Moments, holomorphic Hecke cusp forms, Selberg's limit theorem.}
\thanks{Q. Sun was partially supported by the National Natural Science Foundation of China
(Grant Nos. 12471005 and 12031008) and
the Natural Science Foundation of Shandong Province (Grant No. ZR2023MA003).}
\date{}
\begin{document}
\begin{abstract}
Let $S(t,f)=\pi^{-1}\arg L(1/2+it, f)$, where $f$ is a holomorphic Hecke cusp form of weight $2$ and prime level $q$.
{In this paper, we establish an unconditional asymptotic formula for the moments of $S(t,f)$,
providing a level aspect analogue of Selberg's classical work on $S(t)$.
As a consequence, we derive a weighted central limit theorem for the distribution of $S(t,f)$ normalized by $\sqrt{\log\log q}$.
To this end, we develop a precise approximation for $S(t,f)$ via a truncated Dirichlet series and employ a weighted zero-density estimate for the corresponding family of $L$-functions.}
\end{abstract}
\maketitle

\section{Introduction}
\setcounter{equation}{0}
For $T>0$, let $N(T)$ be the number of zeros $\rho=\beta+i\gamma$ of the Riemann zeta function $\zeta(s)$ in the rectangle $0<\beta<1$, $0<\gamma\leq T$.
In particular, Davenport \cite[Chapter 15]{Davenport} gives that the approximate formula of $N(T)$ is
\bna
N(T)=\frac{T}{2\pi}\log \frac{T}{2\pi}-\frac{T}{2\pi}+\frac{7}{8}+S(T)+O(T^{-1}),
\ena
where $$S(T)=\frac{1}{\pi}\arg \zeta\big(\frac{1}{2}+iT\big).$$
The argument of $\zeta\big(1/2+iT\big)$ is defined as the continuous variation along the horizontal
segment from $\infty+iT$ to $1/2+iT$ starting at $\infty+iT$ with the value $0$.
Understanding the complicated behavior of $S(t)$ is an interesting topic in analytic number theory,
and a series of results have emerged in recent years due to Selberg's pioneering work.

In \cite{Selberg, Selberg1}, Selberg showed that for any $n\in \mathbb{N}$ one has
\bea\label{S(t) even moment}
\int_{T}^{2T}S(t)^{2n}\mathrm{d}t=\frac{(2n)!}{n!(2\pi)^{2n}}T(\log\log T)^n+O\big(T(\log\log T)^{n-1/2}\big),
\eea
which can imply, on average sense, $|S(t)|$ has order of magnitude $\sqrt{\log\log T}$.
The same method can be applied to get a similar result relating to Dirichlet $L$-functions $L(s,\chi)$,
where $\chi$ is the primitive Dirichlet character of modulus $q$.
More precisely, for prime modulus $q$ and $t>0$, Selberg \cite{Selberg2} showed that
\bea\label{S(chi) even moment}
\sideset{}{^*}\sum_{\chi \bmod q} S(t, \chi)^{2n}=\frac{(2n)!}{n!(2\pi)^{2n}}q(\log\log q)^n+O\big(q(\log\log q)^{n-1/2}\big),
\eea
where $*$ denotes the summation goes through the primitive characters $\chi \bmod q$,
and $S(t,\chi)$ is defined by $$S(t,\chi)=\frac{1}{\pi}\arg L\big(\frac{1}{2}+i t, \chi\big).$$
Note that the counterparts of \eqref{S(t) even moment} and \eqref{S(chi) even moment} for the odd-order moments
can also be obtained by making a minor adaptation in Selberg's techniques.

For higher rank $L$-functions, there are some analogues of \eqref{S(chi) even moment} in the spectral and weight aspects, respectively.
To provide context for these results and our own work, we first recall some fundamental concepts.
The notation $\mathrm{GL}_{n}$ denotes the general linear group of $n \times n$ invertible matrices,
while $\mathrm{SL}_{n}(\mathbb{R})$(resp. $\mathrm{SL}_{n}(\mathbb{Z})$) is
the special linear group of $n \times n$ real matrices (resp. integer matrices) with determinant 1. For a positive integer $q$, the Hecke congruence subgroup of level $q$ is defined as
\[
\Gamma_0(q) = \left\{ \begin{pmatrix} a_{11} & \cdots & a_{1n} \\ \vdots & \ddots & \vdots \\ a_{n1} & \cdots & a_{nn} \end{pmatrix} \in \mathrm{SL}_{n}(\mathbb{Z}) : a_{ij} \equiv 0 \bmod q \text{ for all } i>j\right\}.
\]
For $n = 2$, this group acts on the upper half-plane $\mathbb{H} = \{z=x+iy \in \mathbb{C} : y > 0\}$ via M\"obius transformations $z \mapsto (az+b)/(cz+d)$; for general $n$, the corresponding symmetric space is $\mathrm{SL}_{n}(\mathbb{R})/\mathrm{SO}_n$, with $\textrm{SO}_n$ the group of all $n\times n$ real orthogonal matrices with determinant 1.
Within this framework, and under Generalized Riemann Hypothesis (GRH), Hejhal and Luo \cite{HL} (resp. Liu and Liu \cite{LL}) obtained an asymptotic
formula for the spectral moments of $S_F(t)=\pi^{-1}\arg L\big(\frac{1}{2}+it, F\big)$ for each fixed $t>0$, where
$F$ is the Hecke--Maass cusp form for $\rm SL_2(\mathbb{Z})$ (resp. $\rm SL_3(\mathbb{Z})$)
with eigenvalue $\lambda_F=1/4+t_F^2$ (resp. the spectral parameter $\nu_F=(\nu_{F,1}, \nu_{F,2}, \nu_{F,3})$).
Recently, when $f$ is a holomorphic Hecke cusp form of $\rm SL_2(\mathbb{Z})$ with weight $k$,
Liu and Shim \cite{LS} unconditionally establish the asymptotic formula for the moments of
$S(t,f)=\frac{1}{\pi}\arg L\big(\frac{1}{2}+it, f\big)$,
by means of the weighted version of the zero-density estimate obtained by Hough \cite{Hough} in the weight aspect.
Subsequently, the authors \cite{SW} (resp. \cite{SW1}) employ similar
arguments to eliminate the dependence on the GRH in \cite{HL} (resp. \cite{LL})
and obtain unconditional asymptotic formulas for higher-order moments.

In this paper, we are devoted to get an unconditional analog for $\rm GL_2$ in the level aspect without assuming the GRH.
Assume $q$ is a fixed large prime.
Let $S_2(q)$ be the space of holomorphic cusp forms of weight $2$ and level $q$
equipped with the Petersson inner product
\bna
\left<f,g\right>=\int_{\Gamma_0(q)\setminus\mathbb{H}}f(z)\overline{g(z)}\frac{\mathrm{d}x\mathrm{d}y}{y^2}.
\ena
Here $\Gamma_0(q)\setminus\mathbb{H}$ is a fundamental domain of the action of the Hecke congruence subgroup
$\Gamma_0(q)$ on the upper-half plane $\mathbb{H}$.
Let $H_2(q)$ be a basis of
$S_2(q)$ that are eigenfunctions of all the Hecke operators and have the first Fourier coefficient $\lambda_f(1)=1$.
For each $f\in H_2(q)$, let
\bna
f(z)=\sum_{n=1}^\infty n^{1/2}\lambda_f(n)e(nz), \quad\,\, z\in\mathbb{H}
\ena
be its Fourier expansion.
The Fourier coefficients $\lambda_f(n)$ are real and satisfy
the Deligne's bound (in particular, this is due to Eichler--Shimura \cite{Shimura} in the case of weight $2$)
\bea\label{Deligne bound}
|\lambda_f(n)|\leq \tau(n),
\eea
with $\tau(n)$ being the divisor function.
For $\Re(s)>1$, the Hecke $L$-function associated to $f\in H_2(q)$ is defined by
\bna
L(s,f)=\sum_{n=1}^\infty\frac{\lambda_f(n)}{n^s}=\prod_p(1-\lambda_f(p)p^{-s}+\varepsilon_q(p)p^{-2s})^{-1},
\ena
where $\varepsilon_q(\cdot)$ is the principal Dirichlet character modulo $q$.
It can be analytically continued to the whole complex plane and has the completed $L$-function
\bna
\Lambda(s,f)=\left(\frac{\sqrt{q}}{2\pi}\right)^{s}
\Gamma\left(s+\frac{1}{2}\right)L(s,f),
\ena
which satisfies the functional equation
\bna\label{FE}
\Lambda(s,f)=\epsilon_f\Lambda(1-s,f),
\ena
where $\epsilon_f=q^{1/2}\lambda_f(q)\in\{\pm 1\}$ is the sign of the functional equation.
Moreover, we have the following multiplicative property of the coefficients $\lambda_f(n)$,
\bea\label{the Hecke relation}
\lambda_f(m)\lambda_f(n)
=\sum\limits_{d|(m,n)}\varepsilon_q(d)\lambda_f\left(\frac{mn}{d^2}\right),
\quad \text{for}\,\, m,n\geq 1.
\eea
If $p$ is a prime $\neq q$, we write $1-\lambda_f(p)Y+Y^2=(1-\alpha_f(p)Y)(1-\beta_f(p)Y)$,
so
\bna
\lambda_f(p)=\alpha_f(p)+\beta_f(p).
\ena
The bound \eqref{Deligne bound} is equivalent (for $n$ coprime with the level) with the assertion that $|\alpha_f(p)|=1$
for all $p\neq q$. For $p=q$, the $p$-factor of $L(s,f)$ is of degree at most 1, and we let $\alpha_f(p)=\lambda_f(p)$,
which is shown to be of modulus at most $1$, and $\beta_f(p)=0$.

Now we define $S(t,f)$ by
\bna
S(t,f):=\frac{1}{\pi}\arg L(1/2+it, f),
\ena
where the argument $\arg L(1/2+it, f)$ is obtained by continuous variation along the line $\Im(s)=t$ from
$\sigma=+\infty$ to $\sigma=1/2$.
Our main result is the following theorem.
\begin{theorem}\label{main-theorem}
Let $t>0$ and $n\in \mathbb{N}$ be given.
For sufficiently large prime number $q$, we have
\begin{equation*}
\begin{split}
\sum_{f\in H_2(q)}\omega_f\cdot S(t,f)^n=
C_n(\log\log q)^{n/2}+O_{t,n}\big((\log\log q)^{(n-1)/2}\big),
\end{split}
\end{equation*}
where the harmonic weight $\omega_f$ is defined by \eqref{the harmonic weight} below and $C_n$ is defined by
\bna
C_n=\begin{cases}
\frac{n!}{(n/2)!(2\pi)^{n}}, \,&\textit{if }\, n\,\, \text{is even},\\
0, \,& \textit{if }\, n \,\,\text{is odd}.
\end{cases}
\ena
\end{theorem}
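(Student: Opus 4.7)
The plan is to follow the classical Selberg strategy \cite{Selberg, Selberg1, Selberg2}, as adapted to a holomorphic $\GL_2$ family by Liu--Shim \cite{LS}, with the weight-aspect zero-density input of Hough \cite{Hough} replaced by a level-aspect analogue and with averaging over $H_2(q)$ carried out via the Petersson trace formula.

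The first step is a Selberg-type explicit formula for $S(t,f)$. For a parameter $x$ with $\log x$ a small power of $\log q$, and a smooth von Mangoldt approximant $\Lambda_x$ supported in $[1,x^3]$ and equal to $\Lambda$ on $[1,x]$, standard contour shifts applied to $L'/L(s,f)$ yield
\bna
\pi S(t,f) = -\Im \sum_{m\geq 1} \frac{\Lambda_x(m)\,\lambda_f(m)}{m^{1/2+it}\log m} + R(t,f,x),
\ena
where the remainder $R(t,f,x)$ is controlled by the distance from $1/2+it$ to the nearest zeros of $L(s,f)$. To dispense with GRH, I would establish a level-aspect zero-density estimate of the shape
\bna
\sum_{f\in H_2(q)}\frac{1}{4\pi\langle f,f\rangle}\,\#\bigl\{\rho=\beta+i\gamma:\,L(\rho,f)=0,\ \beta\geq\tfrac{1}{2}+\sigma,\ |\gamma-t|\leq 1\bigr\} \ll_t q^{-c\sigma}
\ena
for some absolute $c>0$, obtained from a mollified second moment of $L(1/2+\sigma+it,f)$ over $H_2(q)$ in the spirit of Kowalski--Michel. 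Raised to the $n$th power and combined with pointwise trivial bounds on the small exceptional set of $f$'s, this shows that the contribution of $R(t,f,x)$ to the $n$th moment is $O_{t,n}\bigl((\log\log q)^{(n-1)/2}\bigr)$.

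It then suffices to evaluate
\bna
M_n(q):=\sum_{f\in H_2(q)}\frac{1}{4\pi\langle f,f\rangle}\Bigl(-\frac{1}{\pi}\Im \sum_{m\geq 1}\frac{\Lambda_x(m)\lambda_f(m)}{m^{1/2+it}\log m}\Bigr)^n.
\ena
Prime-power terms with $m=p^k$, $k\geq 2$, are controlled by $\sum_p p^{-k}=O(1)$ and absorbed into the error. Expanding the $n$th power, linearising the products $\lambda_f(p_1)\cdots\lambda_f(p_n)$ via the Hecke relation \eqref{the Hecke relation}, and applying the Petersson trace formula to $\sum_f (4\pi\langle f,f\rangle)^{-1}\lambda_f(\cdot)$ produces a diagonal plus a Kloosterman off-diagonal contribution. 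Since $p_1\cdots p_n\ll x^n=q^{o(1)}$, the Weil bound and Bessel decay render the off-diagonal negligible. The diagonal survives only when the primes pair up, and each pair contributes, by Mertens' theorem,
\bna
\sum_{p\leq x}\frac{\sin^2(t\log p)}{p}=\sum_{p\leq x}\frac{1-\cos(2t\log p)}{2p}\sim \tfrac{1}{2}\log\log q.
\ena
The standard Gaussian combinatorics---the number of pairings of $n$ objects being $n!/((n/2)!\,2^{n/2})$ when $n$ is even and $0$ otherwise---together with the overall factor $1/\pi^n$ and the factors of $2$ coming from the variance $\tfrac{1}{2\pi^2}\log\log q$, reproduces the constant $C_n$.

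The main obstacle is the unconditional level-aspect zero-density estimate that replaces GRH; once this is secured via a mollified second moment computation for $L(s,f)$ over $H_2(q)$, the remaining steps are a routine combination of Selberg's explicit formula with the Petersson trace formula and classical Gaussian combinatorics.
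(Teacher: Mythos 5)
Your proposal follows essentially the same route as the paper: a Selberg-type explicit formula for $S(t,f)$ built from $\Lambda_x$, an unconditional weighted zero-density estimate to control the remainder, and the Petersson trace formula plus Hecke relations and Gaussian pairing combinatorics for the main term. Two points, however, need correction. First, your parameter choice ``$\log x$ a small power of $\log q$'' (consistent with your later claim $x^n=q^{o(1)}$) is too small: the variance produced by the diagonal is $\frac{1}{2\pi^2}\log\log x$, so you need $\log\log x=\log\log q+O(1)$, i.e.\ $x$ a fixed small power of $q$ (the paper takes $x=q^{\delta/3}$). With $\log x=(\log q)^{\theta}$, $\theta<1$, your main term would come out as $C_n\theta^{n/2}(\log\log q)^{n/2}$, i.e.\ the wrong constant; the Petersson off-diagonal still converges for $x=q^{\delta}$ provided $n\delta$ is small, so nothing is lost by enlarging $x$. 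Second, the ``main obstacle'' you identify --- proving a level-aspect zero-density estimate via a mollified second moment --- is already in the literature: the paper simply quotes Kowalski--Michel \cite[Theorem 4]{KM1} as Lemma \ref{zero-density}. The real work is instead in converting that harmonically weighted estimate into a bound for the $2n$-th moment of the remainder: the paper does this by introducing the $f$-dependent abscissa $\sigma_{x,f}$ of \eqref{sigma_x}, expressing the error in Theorem \ref{theorem 1} through $(\sigma_{x,f}-1/2)x^{\sigma_{x,f}-1/2}$, and summing dyadically over $\beta-\frac12$ against the density bound, which forces the quantitative constraint $0<\delta<\frac{6c}{8n+3A+3}$ linking $x$ to $n$ and to the constants $c,A$ of Lemma \ref{zero-density}. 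Your sketch (``raise to the $n$th power and use pointwise trivial bounds on a small exceptional set'') is the right idea in spirit but elides exactly this step, which is where the unconditional nature of the result is actually earned.
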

\begin{remark}
For a holomorphic Hecke cusp form $f$ with weight $2$ and level prime number $q$,
Theorem \ref{main-theorem} yields the asymptotic
\[
\sum_{f \in H_2(q)}\omega_f\cdot S(t,f)^{2m} \sim \frac{(2m)!}{m!(2\pi)^{2m}} (\log\log q)^m, \quad m\in \mathbb{N},
\]
as $q\rightarrow\infty$. Together with Lemma \ref{corollary} below, which shows that
the harmonic weight $\omega_f$ satisfies
\[\sum_{f \in H_2(q)} \omega_f =1+o(1),\]
we conclude that $|S(t,f)|$ on average has order of magnitude $\sqrt{\log\log q}$.
\end{remark}
It is another interesting problem to consider the following probability measure
$\mu_q$ on $\mathbb{R}$, defined by
\bna
\mu_q(E):=\left(\sum_{f\in H_2(q)}\omega_f\cdot
\textbf{1}_E\left(\frac{S(t,f)}{\sqrt{\log\log q}}\right)\right)\bigg/
\left(\sum_{f\in H_2(q)}\omega_f\right),
\ena
where $\textbf{1}_E$ is the characteristic function on a Borel measurable set $E$ in $\mathbb{R}$.
As a consequence of Theorem \ref{main-theorem}, we obtain the following weighted central limit theorem.
\begin{theorem}\label{main-corollary}
As $q\rightarrow \infty$, the probability measure $\mu_q$ converges to the Gaussian distribution of mean $0$
and variance $(2\pi^2)^{-1}$; that is
\bna
\lim_{q\rightarrow \infty} \mu_q([a,b])=\sqrt{\pi}\int_a^b \exp(-\pi^2\xi^2)\mathrm{d}\xi
\ena
for any $a<b$.
\end{theorem}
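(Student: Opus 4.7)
The strategy is the classical method of moments. Since a Gaussian distribution satisfies Carleman's criterion, it is uniquely determined by its moments; hence it suffices to verify, for each fixed positive integer $n$, that the $n$-th moment of $\mu_q$ converges to the $n$-th moment of the Gaussian of mean $0$ and variance $(2\pi^2)^{-1}$, and then to invoke the Fr\'{e}chet--Shohat convergence theorem.

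Unfolding the definition of $\mu_q$, one writes
\begin{align*}
\int_{\mathbb{R}}\xi^n\,\mathrm{d}\mu_q(\xi)
\;=\; (\log\log q)^{-n/2}\cdot
\frac{\sum_{f\in H_2(q)}\frac{S(t,f)^n}{4\pi\langle f,f\rangle}}
{\sum_{f\in H_2(q)}\frac{1}{4\pi\langle f,f\rangle}}.
\end{align*}
The denominator is the total harmonic mass and equals $1+O(q^{-1+\varepsilon})$, either by a direct application of Petersson's trace formula or by Lemma \ref{corollary}. The numerator, after absorbing the factor $(\log\log q)^{-n/2}$, is controlled by Theorem \ref{main-theorem} and equals $C_n + O_{t,n}((\log\log q)^{-1/2})$. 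Hence
\begin{align*}
\int_{\mathbb{R}}\xi^n\,\mathrm{d}\mu_q(\xi) \;=\; C_n + O_{t,n}\!\left((\log\log q)^{-1/2}\right).
\end{align*}

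Next I identify $C_n$ as the $n$-th moment of the density $\sqrt{\pi}\,e^{-\pi^2\xi^2}$. The odd moments of both sides vanish by symmetry; for $n=2k$, the substitution $u=\pi\xi$ together with the standard evaluation $\int_{\mathbb{R}} u^{2k}e^{-u^2}\,\mathrm{d}u = \sqrt{\pi}\,(2k)!/(4^k k!)$ yields
\begin{align*}
\sqrt{\pi}\int_{\mathbb{R}}\xi^{2k}\,e^{-\pi^2\xi^2}\,\mathrm{d}\xi \;=\; \frac{(2k)!}{k!\,(2\pi)^{2k}} \;=\; C_{2k},
\end{align*}
and in particular the variance $C_2=(2\pi^2)^{-1}$ matches the assertion. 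Moment convergence then forces weak convergence via Fr\'{e}chet--Shohat; since the limit distribution is absolutely continuous, every interval $[a,b]$ is a continuity set, and the stated formula $\lim_{q}\mu_q([a,b])=\sqrt{\pi}\int_a^b e^{-\pi^2\xi^2}\,\mathrm{d}\xi$ follows. At this level the proof presents no genuine obstacle: all arithmetic difficulty is encoded in Theorem \ref{main-theorem}, and the only residual work is the bookkeeping check that the harmonic mass $\sum_f (4\pi\langle f,f\rangle)^{-1}$ tends to $1$ and the elementary Gaussian moment computation above.
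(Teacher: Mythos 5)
Your proposal is correct and follows essentially the same route as the paper: the method of moments (the paper cites Billingsley's Theorem 30.2, i.e.\ the Fr\'{e}chet--Shohat argument), with the harmonic mass normalized via Lemma \ref{corollary} and the moments supplied by Theorem \ref{main-theorem}; your explicit verification that $C_{2k}=\sqrt{\pi}\int_{\mathbb{R}}\xi^{2k}e^{-\pi^2\xi^2}\,\mathrm{d}\xi$ is a detail the paper leaves implicit.
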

\noindent{\bf Notation.} Throughout the paper,
$\varepsilon$ and A are arbitrarily small and arbitrarily large positive
real numbers, which may be different at each occurrence.
As usual, $e(z) = \exp (2 \pi i z) = e^{2 \pi i z}$.
We write $f = O(g)$ or $f \ll g$ to mean $|f| \leq cg$ for some unspecified positive constant $c$.
The notation $f\sim g$ means that $f-g=o(g)$.
The symbol $f \asymp g$ means that $f=O(g)$ and $g=O(f)$.

\section{Preliminaries}
\label{prelim}
We firstly collect some classical results as the beginning of this section.
\begin{lemma} Let $x\geq 2$. The following statements hold.\\
(1) There exists a constant $c_1$, such that
\bea\label{classical bound 1}
\sum_{p\leq x}\frac{1}{p}=\log\log x+c_1+O\left(\frac{1}{\log x}\right).
\eea
(2) We have
\bna\label{classical bound 3}
\sum_{p\leq x}\frac{\log p}{p}=\log x+O(1),
\ena
and
\bna\label{classical bound 2}
\sum_{p\leq x}\frac{(\log p)^2}{p}\ll (\log x)^2.
\ena
\end{lemma}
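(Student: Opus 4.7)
The three bounds stated here are classical Mertens-type consequences of the prime number theorem, and my plan is to derive all of them from partial summation applied to $\pi(x)$ and $\theta(x) := \sum_{p \leq x} \log p$; no new input is needed beyond a PNT with a good error term.

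For the two estimates in part (2), I would start from the version $\theta(x) = x + O(x/\log x)$, or even just Chebyshev's $\theta(x) \ll x$ when only the weaker inequality is required. Abel summation against the function $u \mapsto 1/u$ gives
\begin{align*}
\sum_{p\le x}\frac{\log p}{p}=\frac{\theta(x)}{x}+\int_2^x\frac{\theta(u)}{u^2}\,du,
\end{align*}
and inserting the asymptotic for $\theta(u)$ on the right hand side yields the main term $\int_2^x du/u = \log x + O(1)$ plus a convergent error, which is \eqref{classical bound 3}. The second line of (2) then follows by the trivial dominance $\log p \leq \log x$ inside the sum: $\sum_{p\le x}(\log p)^2/p \leq (\log x)\sum_{p\le x}(\log p)/p \ll (\log x)^2$, using what was just proved.

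The remaining estimate (1) is the only one requiring a quantitative PNT, namely $\pi(x) = \mathrm{Li}(x) + O\bigl(x \exp(-c\sqrt{\log x})\bigr)$ from the classical de~la~Vall\'ee~Poussin zero-free region. Writing $\sum_{p\le x}1/p = \int_{2^-}^x u^{-1}\,d\pi(u)$, integrating by parts, and decomposing $\pi(u) = \mathrm{Li}(u) + E(u)$ splits the sum into the main piece
\begin{align*}
\int_2^x \frac{du}{u \log u} = \log\log x - \log\log 2,
\end{align*}
which produces the constant $c_1$ after absorbing a convergent remainder from $\mathrm{Li}$ vs.\ $1/\log u$, plus the error contribution $E(x)/x + \int_2^x E(u)/u^2\,du$. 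The latter is bounded by $\exp(-c_2\sqrt{\log x})$ after splitting the integral at $\sqrt{x}$ and using monotonicity of $u \mapsto \exp(-c\sqrt{\log u})$ on each piece, so as not to lose the exponential savings. (I read the statement as containing a sign typo: the intended error is $O(\exp(-c_2\sqrt{\log x}))$, otherwise the bound is vacuous.)

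There is no genuine obstacle; the entire argument is standard bookkeeping with partial summation on PNT. The only point of care is to track the PNT error through the integration by parts so that the exponentially decaying estimate survives, which is what forces the split of the integral at $\sqrt{x}$ in the last step.
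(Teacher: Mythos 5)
The paper offers no proof of this lemma at all: it is stated as a collection of classical facts at the top of Section 2, so there is nothing to compare your route against except the standard literature, and your argument is indeed the standard one. Your reading of part (1) is right --- the exponent in the error term must be $-c_2\sqrt{\log x}$, otherwise the statement is vacuous --- and your treatment of it (partial summation against $\pi(u)=\mathrm{Li}(u)+E(u)$ with the de la Vall\'ee Poussin error, then controlling the tail of $\int E(u)u^{-2}\,du$) is correct.

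There is, however, one step in part (2) that fails as written. If you only assume $\theta(u)=u+O(u/\log u)$, then the error contribution to
\begin{equation*}
\sum_{p\le x}\frac{\log p}{p}=\frac{\theta(x)}{x}+\int_2^x\frac{\theta(u)}{u^2}\,du
\end{equation*}
is $\int_2^x O\!\left(\frac{1}{u\log u}\right)du=O(\log\log x)$, which is \emph{not} convergent and gives only $\log x+O(\log\log x)$, weaker than the stated $\log x+O(1)$. The fix is either to use the same strong form $\theta(u)=u+O\bigl(u\,e^{-c\sqrt{\log u}}\bigr)$ that you already invoke for part (1) (then $\int_2^\infty e^{-c\sqrt{\log u}}u^{-1}\,du$ does converge), or to prove Mertens' first theorem elementarily from $\log n!=\sum_{p^k\le n}\lfloor n/p^k\rfloor\log p$ and Stirling, which needs no PNT at all. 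Your deduction of $\sum_{p\le x}(\log p)^2/p\ll(\log x)^2$ from the first estimate via $\log p\le\log x$ is fine (and for that inequality even Chebyshev's $\theta(x)\ll x$ would suffice).
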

\begin{proof}
See Davenport \cite[pp.\,56--57]{Davenport}.
\end{proof}
The Petersson trace formula is given by the following basic orthogonality relation on $H_2(q)$
(see e.g., Kowalski--Michel \cite{KM}).
\begin{lemma}\label{Petersson trace formula}
Let $m,n\geq 1$. Then
\bna
\sum_{f\in H_2(q)}\omega_f\cdot\lambda_f(m)\lambda_f(n)=\delta_{m,n}-
2\pi \sum_{c\equiv 0\bmod q}\frac{S(m,n;c)}{c}J_1\left(\frac{4\pi\sqrt{mn}}{c}\right),
\ena
where $J_1$ is the Bessel function and $S(m,n;c)$ is a classical Kloosterman sum,
\bea\label{the harmonic weight}
\omega_f:=\frac{1}{4\pi \left<f,f\right>}=\frac{\zeta(2)}{|H_2(q)|}\cdot
\frac{1}{L(1,\mathrm{sym}^2f)}\cdot\left(1+O\left(\frac{(\log q)^4}{q}\right)\right)
\eea
is the harmonic weight of $f$.
\end{lemma}

We record here some properties of Bessel functions will be used. The Bessel function
of the first kind, $J_\nu(x)$ with $\nu\in\mathbb{C}$, has Taylor series about zero given by (see \cite[pg. 918, Eq. 8.440]{GR})
\bna
J_\nu(x)=\left(\frac{x}{2}\right)^\nu\sum_{m=0}^\infty\frac{(-1)^m}{m!\Gamma(\nu+1+m)}\left(\frac{x}{2}\right)^{2m}.
\ena
Specializing to $\nu=k-1$ with $k\geq 1$,
we have the following bound for $x>0$ (see \cite[Sections 2.11, 7.1]{Watson}),
\bna\label{J_k bound}
J_{k-1}(x)\ll_k
\begin{cases}
x^{k-1}, \,&\textit{ if } x\leq1,\\
x^{-1/2}, \,&\textit{ if } x>1.
\end{cases}
\ena
By a general bound for Kloosterman sums (see e.g., \cite[(2.13)]{ILS})
\bna
|S(m,n;c)|\leq (m,n,c)\min\left\{\frac{c}{(n,c)},\frac{c}{(m,c)}\right\}^{\frac{1}{2}}\tau(c),
\ena
and the bound $J_1(x)\ll x$ for $J$-Bessel function, we derive (see e.g., \cite[Corollary 2.3]{ILS})
\begin{lemma}\label{corollary}
For any integers $m,n\geq 1$ with $6\pi\sqrt{mn}\leq q$, we have
\bna
\sum_{f\in H_2(q)}\omega_f\cdot\lambda_f(m)\lambda_f(n)=\delta_{m,n}
+O\left(\frac{\tau(q)}{q^{3/2}}\frac{(m,n,q)(mn)^{1/2}}{\big((m,q)+(n,q)\big)^{1/2}}(\tau (m,n))\right).
\ena
\end{lemma}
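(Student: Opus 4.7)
The statement is a direct application of the Petersson trace formula (Lemma \ref{Petersson trace formula}) combined with the Kloosterman and $J$-Bessel estimates displayed just above the lemma. The hypothesis $6\pi\sqrt{mn}\leq q$ is precisely what makes the argument of every $J_1$ appearing in the off-diagonal sum $\leq 2/3<1$, so that the small-$x$ branch of \eqref{J_k bound} applies uniformly.

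The first step is to apply Lemma \ref{Petersson trace formula}, isolate the diagonal contribution $\delta_{m,n}$, and parametrize the off-diagonal $c$-sum by $c=qr$ with $r\geq 1$. For every such $c$ one has $c\geq q\geq 6\pi\sqrt{mn}$, hence $4\pi\sqrt{mn}/c\leq 2/3$, and so \eqref{J_k bound} with $k=2$ yields $J_1(4\pi\sqrt{mn}/c)\ll \sqrt{mn}/c$.

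Next I would apply the stated Kloosterman bound, using the inequality $\min\{u,v\}\leq 2uv/(u+v)$ to recast it in the form
\bna
|S(m,n;qr)|\ll (m,n,qr)\,\tau(qr)\,\frac{(qr)^{1/2}}{\bigl((m,qr)+(n,qr)\bigr)^{1/2}}.
\ena
Three elementary observations separate the $q$- from the $r$-dependencies: $(m,qr)\geq (m,q)$ and $(n,qr)\geq (n,q)$; a prime-by-prime comparison of $p$-adic valuations gives $(m,n,qr)\leq (m,n,q)(m,n,r)$; and $\tau(qr)\ll \tau(r)$ since $q$ is prime. Combining these with the $J_1$ estimate and summing over $r$ produces
\bna
\sum_{c\equiv 0(\bmod q)}\!\frac{S(m,n;c)}{c}\,J_1\!\left(\frac{4\pi\sqrt{mn}}{c}\right)\ll \frac{(m,n,q)\,(mn)^{1/2}}{q^{3/2}\bigl((m,q)+(n,q)\bigr)^{1/2}}\sum_{r\geq 1}\frac{(m,n,r)\tau(r)}{r^{3/2}}.
\ena

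What remains is to control the convergent tail series $\sum_{r\geq 1}(m,n,r)\tau(r)/r^{3/2}$ by a divisor-type function of $(m,n)$. Expanding $(m,n,r)=\sum_{d\mid (m,n),\,d\mid r}\phi(d)$ and swapping summations reduces it to an Euler product over the prime divisors of $(m,n)$ that is dominated, up to an absolute constant, by $\tau((m,n))$. This arithmetic/divisor bookkeeping is the main --- and essentially the only --- technical obstacle; all the analytic input (the Petersson formula, the Weil-type Kloosterman bound, and the small-argument estimate for $J_1$) is already in hand from the preceding results.
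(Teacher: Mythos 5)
Your proposal is correct and follows exactly the route the paper indicates: apply the Petersson trace formula (Lemma \ref{Petersson trace formula}), bound $J_1$ via the small-argument branch of \eqref{J_k bound} (which the hypothesis $6\pi\sqrt{mn}\leq q$ guarantees for every $c=qr\geq q$), insert the Weil-type Kloosterman bound, and sum over $r$; this is precisely the derivation of \cite[Corollary 2.3]{ILS} that the paper cites without further detail. Your additional bookkeeping --- the harmonic-mean recasting of the $\min$, the factorizations $(m,n,qr)\leq(m,n,q)(m,n,r)$ and $\tau(qr)\ll\tau(r)$, and the Euler-product estimate $\sum_{r\geq1}(m,n,r)\tau(r)r^{-3/2}\ll\tau\big((m,n)\big)$ --- all checks out and correctly produces the stated error term.
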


We will need the following weighted version of zero-density estimate
which was established by Kowalski--Michel \cite[Theorem 4]{KM1}.
\begin{lemma}\label{zero-density}
Let $q$ be a prime number and $$N(f; \sigma, T):=\#\{\rho=\beta+i\gamma \,|\, L(\rho,f)=0, \sigma<\beta, |\gamma|<T\}.$$
There exists an absolute constant $A>1$ such that for any real number $T$ with $T\geq \frac{1}{\log q}$,
for any $\frac{1}{2}+\frac{1}{\log q}\leq\sigma\leq 1$, and for any $c$ with $0<c<\frac{1}{2}$, one has
\bna
\sum_{f\in H_2(q)}\omega_f\cdot N(f; \sigma, T)\ll (1+T)^Aq^{-2c(\sigma-1/2)}(\log q),
\ena
where the implied constant depends only on $c$.
\end{lemma}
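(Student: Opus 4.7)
The plan is to adapt the classical log-free zero-density method of Selberg (cf.\ Iwaniec--Kowalski, Ch.~10) to the harmonically weighted family $\{\omega_f\}_{f\in H_2(q)}$, with the Petersson trace formula (Lemma~\ref{Petersson trace formula}) replacing the orthogonality of Dirichlet characters and the level $q$ playing the role of the conductor. The scheme has four parts: (i) detect each hypothetical zero of $L(s,f)$ by the non-smallness of a mollified Dirichlet polynomial; (ii) bound the zero count by a $2k$-th moment of that polynomial via Littlewood's lemma; (iii) average over $f$ using Petersson; (iv) optimize the mollifier length against the Kloosterman error.

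For step (i) one builds a mollifier
\bna
M_X(s,f)=\sum_{n\leq X}\mu_f(n)\psi(n/X)n^{-s},
\ena
where the $\mu_f(n)$ are the Dirichlet coefficients of $L(s,f)^{-1}$ (bounded by $\tau(n)$ via \eqref{Deligne bound} and the Hecke relation \eqref{the Hecke relation}), and $\psi$ is a smooth cutoff. Combined with the approximate functional equation induced by $\Lambda(s,f)=\epsilon_f\Lambda(1-s,f)$, one arrives at an identity
\bna
L(s,f)M_X(s,f)=1+D(s,f)+E(s,f),
\ena
with $D(s,f)$ a Dirichlet polynomial in $\lambda_f(m)$ of length $\ll X\sqrt{q}\,(1+|t|)$ and $E$ negligible on the relevant lines. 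At every zero $\rho=\beta+i\gamma$ with $\beta>\sigma$ and $|\gamma|<T$, the left side vanishes, forcing $|D(\rho,f)|\gg 1$.

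For steps (ii)--(iii) I would apply Littlewood's lemma (or a well-spaced subset argument shifted to $\sigma_0=\sigma-c_0/\log q$) to obtain
\bna
N(f;\sigma,T)\ll (\log q)\int_{-T-1}^{T+1}\big|D(\sigma_0+it,f)\big|^{2k}\dd t
\ena
for an integer $k\geq 1$ to be chosen. Expanding $|D|^{2k}$ produces a Dirichlet polynomial whose coefficients are supported on integers of size $\ll (X\sqrt{q}\,(1+T))^{k}$; summing against $\omega_f$ via Lemma~\ref{Petersson trace formula} reduces matters to a diagonal term, bounded by $(qT)^{O(1)}X^{k(1-2\sigma_0)}$ after standard divisor-type estimates, plus an off-diagonal Kloosterman contribution handled by Lemma~\ref{corollary} (and its refinements). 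Choosing $k$ large and optimizing $X$ so that the off-diagonal remains negligible converts this into the target bound $q^{-2c(\sigma-1/2)}(\log q)T(1+T)^A$ for any $c<1/2$.

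The main obstacle is step (iv): controlling the Kloosterman contribution uniformly as the effective polynomial length $(X\sqrt{q}\,(1+T))^{k}$ approaches and exceeds $q$, while keeping the $T$-dependence of polynomial type $(1+T)^A$. This typically requires a large-sieve-type inequality adapted to $H_2(q)$ with harmonic weights (obtainable through the Kuznetsov formula or refined Petersson bookkeeping) and a careful dyadic analysis of the Bessel factor using \eqref{J_k bound}. The restriction $c<1/2$ encodes the Weil bound for Kloosterman sums, and saturating it is the principal quantitative bottleneck of the method; passing beyond $c=1/2$ would require substantially new input.
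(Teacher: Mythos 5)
First, a point of context: the paper does not prove this lemma at all --- it is quoted verbatim from Kowalski--Michel \cite[Theorem 4]{KM1}, so the only meaningful comparison is with their proof. Your general framework (mollification, zero detection, harmonic averaging via Petersson) is indeed the right family of ideas and is how Kowalski--Michel proceed, but your plan breaks precisely at the step that carries all of the content. Your detection polynomial $D$ has length $L\asymp X\sqrt{q}\,(1+|t|)\geq\sqrt{q}$, because you fold in the approximate functional equation; hence $|D|^{2k}=|D^k|^2$ involves $\lambda_f(m)\lambda_f(n)$ with $m,n$ as large as $L^k\geq q^{k/2}$. Every orthogonality tool available for this family --- Lemma~\ref{Petersson trace formula}, Lemma~\ref{corollary} (which requires $6\pi\sqrt{mn}\leq q$), or a spectral large sieve for $H_2(q)$ --- is effective only while the polynomial length stays at most roughly $q$, the size of the family. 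So ``choosing $k$ large'' is not a parameter to optimize; it is self-defeating: already for $k\geq 2$ the estimate you need is at least as hard as a fourth or higher harmonic moment of mollified level-$q$ $L$-functions, which is an open problem, and no choice of $X$ helps because the $\sqrt{q}$ in $L$ comes from the functional equation, not from the mollifier. The paragraph in which you defer exactly this difficulty to ``a large-sieve-type inequality adapted to $H_2(q)$'' and call it ``the principal quantitative bottleneck'' concedes the core of the theorem rather than proving it: the spectral large sieve exists, but it controls second moments of polynomials of length $\ll q$ and cannot rescue lengths $q^{k/2}$ with $k$ large.

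What actually works --- and what Kowalski--Michel do --- is to never go beyond second moments, i.e.\ $k=1$. Zeros are detected (via a $\Gamma$-smoothed Mellin detector, or equivalently via Littlewood's lemma applied to $LM$ itself together with Jensen's inequality to pass the harmonic average through the logarithm) in such a way that the weighted zero count reduces to the mollified harmonic second moment $\sum_f\omega_f|LM(\sigma_0+it,f)|^2$ plus the second moment of a short detection polynomial. The decay $q^{-2c(\sigma-1/2)}$ then comes from a factor of the shape $Y^{1/2-\beta}$ with $Y$ a power $q^{2c}$, and the restriction $c<\frac12$ is the limit up to which the off-diagonal Kloosterman contribution in that \emph{single} second moment can be controlled; this analysis (Weil's bound combined with the decay of $J_1$, substantially sharper than Lemma~\ref{corollary}, which on its own would cap you near $c<\frac14$) is the technical heart, and it is carried out in \cite{KM} and \cite{KM1}. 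Two smaller points: what you call Littlewood's lemma in step (ii) is not Littlewood's lemma (which concerns $\int\log|F|$ for the analytic function $F=LM$, not $\int|D|^{2k}$) but Montgomery's detection-plus-mean-value argument; and the passage from values at zeros $\beta+i\gamma$ with varying $\beta>\sigma$ to an integral over a fixed line $\Re(s)=\sigma_0$ requires a partial-summation or subharmonicity step that you have not supplied.
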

Finally, we need the lemma of the theory of moments in probability theory (see, for example, \cite[Theorem 30.2]{Billingsley}).
\begin{lemma}\label{probability theory}
Suppose that the distribution of random variable $X$ is determined by its moments, that the $X_n$ have moments of all orders, and that
$\lim_n E[X_n^r]=E[X^r]$ for $r=1,2,\cdots$. Then $X_n\Rightarrow X$ (i.e., $X_n$ converges in distribution to $X$).
\end{lemma}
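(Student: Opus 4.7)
The plan is to deduce $X_n \Rightarrow X$ by a standard subsequence device: establish tightness of the laws $\{\mathcal{L}(X_n)\}$, extract an arbitrary weak subsequential limit via Prokhorov's theorem, and use the moment-uniqueness hypothesis on $X$ to identify every such limit with $X$ in distribution. Once this is done, every subsequence of $\{X_n\}$ contains a further subsequence converging weakly to $X$, which by the standard metrizable-characterization of weak convergence forces $X_n \Rightarrow X$.

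For tightness, I would use the hypothesis $E[X_n^2] \to E[X^2]$, which yields $M := \sup_n E[X_n^2] < \infty$. Chebyshev's inequality then gives $P(|X_n| > K) \leq M/K^2$ uniformly in $n$, so $\{\mathcal{L}(X_n)\}$ is tight. By Prokhorov's theorem any subsequence $\{X_{n_j}\}$ admits a further subsequence $\{X_{n_{j_k}}\}$ converging weakly to some random variable $Y$. It then suffices to show $E[Y^r] = E[X^r]$ for every integer $r \geq 1$, since by the moment-uniqueness assumption this will force $Y \stackrel{d}{=} X$.

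The Portmanteau theorem supplies $E[\varphi(X_{n_{j_k}})] \to E[\varphi(Y)]$ only for bounded continuous $\varphi$, so to handle the unbounded function $\varphi(x) = x^r$ I would establish uniform integrability of $\{X_{n_{j_k}}^r\}_k$. This is the main technical step: I would invoke the hypothesis with $r$ replaced by $2r$, giving $E[(X_{n_{j_k}}^r)^2] = E[X_{n_{j_k}}^{2r}] \to E[X^{2r}] < \infty$, which furnishes $L^2$-boundedness and hence uniform integrability. Consequently $E[X_{n_{j_k}}^r] \to E[Y^r]$, and comparing with the given $E[X_{n_{j_k}}^r] \to E[X^r]$ yields $E[Y^r] = E[X^r]$ for all $r$, completing the identification $Y \stackrel{d}{=} X$. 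The argument depends crucially both on access to the next even moment (to secure uniform integrability against the unbounded test function $x^r$) and on the moment-uniqueness hypothesis on $X$, without which two distinct laws could share all moments and the subsequence device would collapse.
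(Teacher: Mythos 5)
Your proof is correct and complete. The paper gives no proof of this lemma at all---it simply cites Billingsley, \emph{Probability and Measure}, Theorem 30.2---and your argument (tightness from the convergent second moments via Chebyshev, Prokhorov extraction of weak subsequential limits, uniform integrability of $X_{n}^{r}$ secured by the convergent $2r$-th moments so that the limit law inherits all the moments of $X$, identification via moment-determinacy, and the subsequence principle for the metrizable topology of weak convergence) is precisely the standard proof found in that cited reference.
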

\section{An approximation of $S(t,f)$}
In this section we will prove several lemmas and derive an approximation of $S(t,f)$.
We denote by $\rho=\beta+i\gamma$ a typical zero of $L(s,f)$ inside the critical strip, i.e., $0<\beta<1$.
For $\Re(s)>1$, we have the Dirichlet series expansion for the logarithmic derivative of $L(s,f)$,
\bea\label{Log derivative definition}
-\frac{L'}{L}(s,f)=\sum_{n=1}^\infty \frac{\Lambda(n)C_f(n)}{n^s},
\eea
where $\Lambda(n)$ denotes the von Mangoldt function, and
\bea\label{Log derivative coefficient}
C_f(n)=\begin{cases}
\alpha_f(p)^m+\beta_f(p)^m, \,&\textit{ if }\, n=p^m\,\, \textit{for a prime}\,\,p,\\
0, \,& \textit{ otherwise}.
\end{cases}
\eea
\begin{lemma}\label{lemma 1}
Let $x>1$. For $s\neq \rho$, and $s\neq -m-\frac{1}{2}$, we have the following identity
\begin{equation*}
\begin{split}
\frac{L'}{L}(s,f)=&-\sum_{n\leq x^3} \frac{C_f(n)\Lambda_x(n)}{n^s}
-\frac{1}{\log^2x}\sum_\rho\frac{x^{\rho-s}(1-x^{\rho-s})^2}{(\rho-s)^3}\\
&-\frac{1}{\log^2x}\sum_{m=0}^\infty\frac{x^{-m-\frac{1}{2}-s}(1-x^{-m-\frac{1}{2}-s})^2}{(-m-\frac{1}{2}-s)^3},
\end{split}
\end{equation*}
where
\bna
\Lambda_x(n)=\begin{cases}
\Lambda(n), \,&\textit{if }\,\,n\leq x,\\
\Lambda(n)\frac{\log^2(x^3/n)-2\log^2(x^2/n)}{2\log^2x}, \,& \textit{if }\,\,x\leq n\leq x^2,\\
\Lambda(n)\frac{\log^2(x^3/n)}{2\log^2x}, \,& \textit{if }\,\,x^2\leq n\leq x^3,\\
0, \,& \textit{if }\,\,n\geq x^3.\\
\end{cases}
\ena
\end{lemma}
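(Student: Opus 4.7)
The plan is to prove this explicit formula via a Mellin--Perron contour integration, applying Cauchy's residue theorem to the Selberg-type kernel
\[
K(w) := \frac{x^w(x^w-1)^2}{w^3 \log^2 x} = \frac{x^{3w} - 2x^{2w} + x^w}{w^3 \log^2 x}.
\]
The key starting observation is the Perron-type identity $\frac{1}{2\pi i}\int_{(c)} \frac{y^w}{w^3}\,\mathrm{d}w = \tfrac{1}{2}(\log y)^2 \mathbf{1}_{y \geq 1}$ for $c > 0$. Applying this with $y = x^3/n,\, x^2/n,\, x/n$ and using the elementary identity $(3u - v)^2 - 2(2u - v)^2 + (u - v)^2 = 2u^2$ (with $u = \log x$, $v = \log n$) shows that the inverse Mellin transform of $K(w)\,n^{-w}$ is exactly $\Lambda_x(n)/\Lambda(n)$ for $n \leq x^3$ and zero otherwise.

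First, for $\Re(s+w) > 1$ and $c > 0$ sufficiently large, absolute convergence of the Dirichlet series \eqref{Log derivative definition} together with the cubic decay of $K$ along vertical lines justifies interchanging summation and integration, giving
\[
\sum_{n \leq x^3}\frac{\Lambda_x(n)\, C_f(n)}{n^s} \;=\; \frac{1}{2\pi i}\int_{(c)} K(w)\left(-\frac{L'}{L}(s+w,f)\right) \mathrm{d}w.
\]
Next, I would shift the contour to $\Re(w) = -M$ for a large parameter $M$. A short Taylor expansion gives $x^w(x^w-1)^2 = w^2 \log^2 x + O(w^3)$ near $w = 0$, so $K$ has a simple pole there with residue $1$, whence the pole of $K(w)(-L'/L)(s+w,f)$ at $w = 0$ contributes exactly $-L'/L(s,f)$. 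At each $w = \rho - s$, where $\rho$ is a nontrivial zero of $L(s,f)$, the simple pole of $-L'/L$ with residue $-1$ produces $-x^{\rho-s}(x^{\rho-s}-1)^2/((\rho-s)^3 \log^2 x)$; summing over $\rho$ and using $(y-1)^2 = (1-y)^2$ yields the first zero-sum. At each $w = -m - 1/2 - s$, the trivial zero of $L(s,f)$ arising from the $\Gamma(s+1/2)$ factor in the completed $L$-function $\Lambda(s,f)$ contributes analogously the second sum.

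The main obstacle is showing that the shifted integral along $\Re(w) = -M$ vanishes as $M \to \infty$. For this I would invoke the functional equation $\Lambda(s,f) = \epsilon_f \Lambda(1-s,f)$ to express $-L'/L(s+w,f)$ on the far left in terms of $-L'/L(1-s-w,f)$ (which is $O(1)$ uniformly on $\Re(w) = -M$ for $M$ large, since $1-s-w$ then lies deep in the half-plane of absolute convergence) plus a constant and $\Gamma'/\Gamma$-contributions growing only logarithmically in $|w|$. Combined with the factor $|x^w| = x^{-M}$ on $\Re(w) = -M$ and the $|w|^{-3}$ decay of the kernel, this yields an $O_{s}(x^{-M})$ bound for the shifted integral, which tends to zero. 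Rearranging the resulting residue identity then gives the claimed formula.
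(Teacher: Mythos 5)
Your proposal is correct and follows essentially the same route as the paper: the identical Perron-type kernel $x^w(1-x^w)^2/w^3$ (the paper writes $(1-x^u)^2$, which equals your $(x^w-1)^2$), the same residue computation at $w=0$, $w=\rho-s$, and $w=-m-\tfrac12-s$ after shifting the contour left. You supply more detail than the paper on two points it leaves implicit — the verification that the three-term expansion of the kernel reproduces $\Lambda_x(n)$, and the decay of the shifted integral via the functional equation — and both are standard and sound.
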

\begin{proof}
Recall the discontinuous integral
\bna
\frac{1}{2\pi i}\int_{(\alpha)}\frac{y^s}{s^3}\mathrm{d}s=
\begin{cases}
\frac{\log^2 y}{2}, \,&\textit{if }\,\, y\geq 1,\\
0, \,& \textit{if }\,\, 0<y\leq 1
\end{cases}
\ena
for $\alpha>0$.
It follows from \eqref{Log derivative definition} and \eqref{Log derivative coefficient} that
\bna
-\log^2x\sum_{n=1}^\infty \frac{C_f(n)\Lambda_x(n)}{n^s}=
\frac{1}{2\pi i}\int_{(\alpha)}\frac{x^u(1-x^u)^2}{u^3}\frac{L'}{L}(s+u,f)\mathrm{d}u,
\ena
where $\alpha=\max \{2, 1+\Re(s)\}$. By moving the line of integration all way to the left, we
pick up the residues at $u=0$, $u=\rho-s$ and $u=-m-\frac{1}{2}-s$ ($m= 0,1,2, \cdots$), and deduce that
\begin{equation*}
\begin{split}
&\frac{1}{2\pi i}\int_{(\alpha)}\frac{x^u(1-x^u)^2}{u^3}\frac{L'}{L}(s+u,f)\mathrm{d}u\\
=&\frac{L'}{L}(s,f)\log^2x+\sum_\rho\frac{x^{\rho-s}(1-x^{\rho-s})^2}{(\rho-s)^3}
+\sum_{m=0}^\infty\frac{x^{-m-\frac{1}{2}-s}(1-x^{-m-\frac{1}{2}-s})^2}{(-m-\frac{1}{2}-s)^3}.
\end{split}
\end{equation*}
Thus we complete the proof of lemma.
\end{proof}
\begin{lemma}\label{lemma 2}
For $s=\sigma+it$, $s'=\sigma'+it'$ such that $1/2\leq \sigma, \sigma'\leq 10$, $s\neq \rho$, $s'\neq \rho$,
we have
\bna\label{Im}
\Im\left(\frac{L'}{L}(s,f)-\frac{L'}{L}(s',f)\right)=\Im\sum_\rho\left(\frac{1}{s-\rho}-\frac{1}{s'-\rho}\right)+O(1),
\ena
and
\bea\label{Re}
\Re\frac{L'}{L}(s,f)=\sum_\rho\frac{\sigma-\beta}{(\sigma-\beta)^2+(t-\gamma)^2}+O(\log(|t|+q)).
\eea
\end{lemma}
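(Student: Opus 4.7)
The plan is to derive both identities from the Hadamard product expansion of the completed $L$-function. Since $\Lambda(s,f)$ is entire of order $1$, Hadamard's factorization gives constants $A,B$ such that
\[
\Lambda(s,f) = e^{A+Bs}\prod_\rho \left(1-\frac{s}{\rho}\right)e^{s/\rho}.
\]
Taking the logarithmic derivative and using the explicit definition of $\Lambda(s,f)$ yields the master identity
\[
\frac{L'}{L}(s,f) = B - \frac{1}{2}\log\frac{q}{4\pi^2} - \frac{\Gamma'}{\Gamma}\!\left(s+\tfrac{1}{2}\right) + \sum_\rho\left(\frac{1}{s-\rho} + \frac{1}{\rho}\right).
\]

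For the first (imaginary-part) claim, I would subtract this identity at $s'$ from the one at $s$. The constant $B$, the real term $\tfrac{1}{2}\log(q/4\pi^2)$, and the conditionally convergent tail $\sum_\rho 1/\rho$ all cancel, leaving
\[
\frac{L'}{L}(s,f)-\frac{L'}{L}(s',f) = \sum_\rho\!\left(\frac{1}{s-\rho}-\frac{1}{s'-\rho}\right) - \left[\frac{\Gamma'}{\Gamma}\!\left(s+\tfrac{1}{2}\right)-\frac{\Gamma'}{\Gamma}\!\left(s'+\tfrac{1}{2}\right)\right].
\]
Taking imaginary parts, Stirling's expansion $\Gamma'/\Gamma(w) = \log w + O(|w|^{-1})$ in the right half-plane gives $\Im\,\Gamma'/\Gamma(w) = \arg w + O(|w|^{-1})$, which stays bounded whenever $\Re w > 0$; hence the digamma difference contributes only $O(1)$ under the hypothesis $\sigma,\sigma' \in [1/2,10]$.

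For the real-part identity, I would pin down the combination $\Re B + \sum_\rho \Re(1/\rho)$ by evaluating the master identity at $s=2$: since $|L'/L(2,f)| \ll 1$ by absolute convergence of the defining Dirichlet series and $\Gamma'/\Gamma(5/2) = O(1)$, taking real parts gives
\[
\Re B + \sum_\rho \Re\frac{1}{\rho} = \frac{1}{2}\log\frac{q}{4\pi^2} - \sum_\rho \frac{2-\beta}{(2-\beta)^2+\gamma^2} + O(1).
\]
Substituting back into the real part of the master identity at $s = \sigma+it$ and using $\Re(s-\rho)^{-1} = (\sigma-\beta)/|s-\rho|^2$ produces the desired main sum plus the extra tail $\sum_\rho (2-\beta)/|2-\rho|^2$ and the term $\Re\,\Gamma'/\Gamma(s+1/2)$. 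Stirling bounds the latter by $O(\log(|t|+1))$, while the tail is comparable to $\sum_\rho 1/(1+\gamma^2)$ and is $O(\log q)$ by the Riemann--von Mangoldt-type estimate $N(f,T+1)-N(f,T-1) \ll \log(q(|T|+2))$ summed against $1/(1+T^2)$. Collecting gives the $O(\log(|t|+q))$ error.

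The main obstacle is the implicit Hadamard constant $B$ together with the conditionally convergent series $\sum_\rho 1/\rho$; both are disposed of uniformly by the evaluation-at-$s=2$ trick, which trades them for a positive, rapidly convergent zero-tail controlled by the standard $\log q$ zero-counting bound. Once that is in place, the rest is bookkeeping with Stirling.
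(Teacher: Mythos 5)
Your argument is correct and follows the same skeleton as the paper's proof: both start from the Hadamard factorization of $\Lambda(s,f)$, cancel the constants in the difference $\frac{L'}{L}(s,f)-\frac{L'}{L}(s',f)$ for the imaginary-part claim, and control the digamma contribution by the series expansion of $\Gamma'/\Gamma$ (bounded imaginary part for $\Re(s+1/2)\geq 1$) together with Stirling. The one substantive difference is how the Hadamard constant is eliminated in the real-part estimate: the paper quotes the identity $\Re(b(f))=-\Re\sum_\rho \rho^{-1}$ from Iwaniec--Kowalski, Proposition 5.7(3), which makes the constant disappear immediately, whereas you pin down $\Re B+\sum_\rho\Re(1/\rho)$ by evaluating the master identity at $s=2$ and then absorb the resulting tail $\sum_\rho (2-\beta)/|2-\rho|^2$ into the error via the standard local zero count (the number of $\rho$ with $|\gamma-T|\leq 1$ is $O(\log(q(|T|+2)))$). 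Your route is the classical Davenport-style argument; it costs an extra cited input which is itself usually proved by essentially the computation you are carrying out, so quoting it as known is what keeps your argument non-circular, while the paper's route is shorter but leans entirely on the cited formula for $\Re b(f)$. Both correctly yield the error term $O(\log(|t|+q))$, and your treatment of the imaginary-part identity is the same in substance as the paper's.
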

\begin{proof}
By the Hadamard factorization theorem of the entire function $\Lambda(s,f)$, we have
\bna
\frac{L'}{L}(s,f)=b(f)+\sum_\rho\left(\frac{1}{s-\rho}+\frac{1}{\rho}\right)
-\frac{\Gamma'(s+1/2)}{\Gamma(s+1/2)}-\frac{1}{2}\log q+\log(2\pi),
\ena
for some $b(f)\in \mathbb{C}$ with $\Re(b(f))=-\Re\sum_\rho\frac{1}{\rho}$ (see \cite[Proposition 5.7 (3)]{IK}).
Note that for $z\not\in -\mathbb{N}$,
\bea\label{log derivative gamma}
\frac{\Gamma'}{\Gamma}(z)=-\gamma+\sum_{k\geq 1}\left(\frac{1}{k}-\frac{1}{k-1+z}\right),
\eea
where $\gamma$ is the Euler constant.
Using \eqref{log derivative gamma}, we get that for $\frac{1}{4}\leq u$,
\bna
\Im\frac{\Gamma'}{\Gamma}(u+iv)=\sum_{k\geq 1}\frac{v}{(k+u-1)^2+v^2}\ll 1.
\ena
And for $|\arg z|\leq \pi-\varepsilon$,
\bna\label{derivative gamma}
\frac{\Gamma'}{\Gamma}(z)=\log z-\frac{1}{2z}+O_\varepsilon\left(\frac{1}{|z|^2}\right),
\ena
thus we complete the proof of lemma.
\end{proof}
Let $x\geq 4$, we define
\bea\label{sigma_x}
\sigma_x=\sigma_{x,f}=\sigma_{x,f,t}:=\frac{1}{2}+2\max_\rho\left\{\left|\beta-\frac{1}{2}\right|, \frac{5}{\log x}\right\},
\eea
where $\rho=\beta+i\gamma$ runs through the zeros of $L(s,f)$ for which
\bea\label{condition 1}
|t-\gamma|\leq\frac{x^{3|\beta-1/2|}}{\log x}.
\eea
\begin{lemma}\label{lemma 3}
Let $x\geq 4$. For $\sigma\geq \sigma_x$, we have
\bna
\frac{L'}{L}(\sigma+it,f)=-\sum_{n\leq x^3} \frac{C_f(n)\Lambda_x(n)}{n^{\sigma+it}}
+O\left(x^{1/4-\sigma/2}\left|\sum_{n\leq x^3} \frac{C_f(n)\Lambda_x(n)}{n^{\sigma_x+it}}\right|\right)
+O\left(x^{1/4-\sigma/2}\log(|t|+q)\right),
\ena
and
\bna
\sum_\rho\frac{\sigma_x-1/2}{(\sigma_x-\beta)^2+(t-\gamma)^2}=
O\left(\left|\sum_{n\leq x^3} \frac{C_f(n)\Lambda_x(n)}{n^{\sigma_x+it}}\right|\right)
+O\big(\log(|t|+q)\big).
\ena
\end{lemma}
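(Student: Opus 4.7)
The plan is to apply Lemma \ref{lemma 1} at $s = \sigma + it$ with $\sigma \geq \sigma_x$, writing
\[
-\tfrac{L'}{L}(\sigma+it,f) = \sum_{n\leq x^3}\frac{C_f(n)\Lambda_x(n)}{n^{\sigma+it}} + Z_x(\sigma) + P_x(\sigma),
\]
where $Z_x(\sigma)$ is the sum over the nontrivial zeros $\rho=\beta+i\gamma$ of $L(s,f)$ and $P_x(\sigma)$ is the sum over the trivial zeros $-m-1/2$. The pole sum converges geometrically and is $\ll x^{-1/2-\sigma}/\log^2 x \ll x^{1/4-\sigma/2}$, so it is absorbed into the claimed error. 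The real work is to estimate $Z_x(\sigma)$.

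I would split $Z_x(\sigma)$ according to whether $\rho$ satisfies the proximity condition \eqref{condition 1}. If $\rho$ satisfies it, then by the definition \eqref{sigma_x} of $\sigma_x$, both $|\beta - 1/2| \leq (\sigma_x - 1/2)/2$ and $\sigma_x - \beta \geq 5/\log x$ hold. The first inequality gives the uniform bound $x^{\beta-\sigma} \leq x^{1/4+\sigma_x/2-\sigma} \leq x^{1/4-\sigma/2}$ (using $\sigma \geq \sigma_x$); the second gives $(\sigma_x-\beta)^{-2} \ll \log^2 x$, providing enough slack in the pointwise estimate. Combined with $|\rho-\sigma-it|^3 \geq (\sigma_x-\beta)\bigl((\sigma_x-\beta)^2+(t-\gamma)^2\bigr)$ (from $\sigma - \beta \geq \sigma_x - \beta$) and $|1-x^{\rho-\sigma-it}| \leq 2$, one obtains
\[
\frac{1}{\log^2 x}\left|\frac{x^{\rho-\sigma-it}(1-x^{\rho-\sigma-it})^2}{(\rho-\sigma-it)^3}\right| \ll x^{1/4-\sigma/2} \cdot \frac{\sigma_x-\beta}{(\sigma_x-\beta)^2+(t-\gamma)^2}.
\]
For zeros failing \eqref{condition 1} one has $|t-\gamma| > x^{3|\beta-1/2|}/\log x$, so a dyadic decomposition of $|t-\gamma|$ combined with the Riemann--von Mangoldt-type counting $\#\{\rho : |\gamma-t|\leq T\} \ll T\log(q(|t|+T+2))$ shows their contribution to $Z_x(\sigma)$ is $O(x^{1/4-\sigma/2}\log(|t|+q))$.

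Summing the contribution of zeros satisfying \eqref{condition 1} and invoking Lemma \ref{lemma 2} at $s'=\sigma_x+it$, which gives
\[
\sum_\rho \frac{\sigma_x-\beta}{(\sigma_x-\beta)^2+(t-\gamma)^2} = \Re\tfrac{L'}{L}(\sigma_x+it,f) + O(\log(|t|+q)),
\]
yields $|Z_x(\sigma)| \ll x^{1/4-\sigma/2}\bigl(|L'/L(\sigma_x+it,f)|+\log(|t|+q)\bigr)$. To eliminate the residual $L'/L$ I would invoke Lemma \ref{lemma 1} once more at $\sigma_x+it$, where the same scheme (now with $\sigma=\sigma_x$) gives $|L'/L(\sigma_x+it,f)| \ll \bigl|\sum_{n\leq x^3}C_f(n)\Lambda_x(n)/n^{\sigma_x+it}\bigr|+\log(|t|+q)$. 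Substituting back proves the first claim.

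The second claim follows by combining the first at $\sigma=\sigma_x$ with Lemma \ref{lemma 2} at $\sigma_x+it$: the inequality $\sigma_x-1/2 \leq 2(\sigma_x-\beta)$, which holds for zeros satisfying \eqref{condition 1} by direct inspection of \eqref{sigma_x}, shows the summand of the second claim is dominated by that of the real-part formula, while zeros failing \eqref{condition 1} contribute only $O(\log(|t|+q))$. The main obstacle is engineering the bootstrap at $\sigma=\sigma_x$: one must bound $Z_x(\sigma_x)$ by the Lemma \ref{lemma 2} sum rather than by $L'/L(\sigma_x+it,f)$ itself, and then control that sum by the Dirichlet polynomial via a second application of Lemma \ref{lemma 1}, so the apparent circular loop is broken by the explicit factor $x^{\beta-\sigma_x} \leq 1$ that saves each zero contribution.
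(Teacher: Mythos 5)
Your overall architecture is the same as the paper's: start from the weighted explicit formula of Lemma \ref{lemma 1}, dominate the zero sum by the Landau-type sum at $\sigma_x$, control that sum by $\Re\frac{L'}{L}(\sigma_x+it,f)$ via \eqref{Re}, and bootstrap at $\sigma=\sigma_x$ to replace $\frac{L'}{L}(\sigma_x+it,f)$ by the Dirichlet polynomial. Your pointwise bound for the zeros satisfying \eqref{condition 1} is correct (for such $\rho$ one indeed has $\sigma_x-\beta\geq(\sigma_x-1/2)/2\geq 5/\log x$, which supplies the two powers of $\log x$ needed to cancel the $\log^{-2}x$ prefactor). The gaps are in your treatment of the zeros that \emph{fail} \eqref{condition 1}.

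First, the counting estimate $\#\{\rho:|\gamma-t|\leq T\}\ll T\log(q(|t|+T+2))$ is false for $T<1$; unconditionally one only has $\ll (T+1)\log(q(|t|+2))$, since up to $c\log q$ zeros may cluster in a window of arbitrarily small length. A zero with $\beta$ at or very near $1/2$ already fails \eqref{condition 1} once $|t-\gamma|>1/\log x$, so the lowest dyadic block $|t-\gamma|\asymp 1/\log x$ may contain $\asymp\log q$ failing zeros; at $\sigma=\sigma_x$ each contributes $\asymp\log^3x$ to the zero sum of Lemma \ref{lemma 1}, and after the $\log^{-2}x$ prefactor your method yields $x^{1/4-\sigma/2}\log x\log(|t|+q)$ rather than the stated $x^{1/4-\sigma/2}\log(|t|+q)$. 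Since $x=q^{\delta/3}$ downstream, this extra $\log x\asymp\log q$ turns the error in Theorem \ref{theorem 1} into $(\sigma_x-1/2)(\log q)^2$, which Lemma \ref{lemma 6} cannot absorb (forms with $\sigma_{x,f}-1/2=10/\log x$ alone already make $\sum_f\omega_f(\sigma_{x,f}-1/2)^{2n}(\log q)^{4n}\gg(\log q)^{2n}$). Second, to pass from the sum over zeros satisfying \eqref{condition 1} to the full sum in \eqref{Re} you must discard the failing zeros, whose terms $\frac{\sigma_x-\beta}{(\sigma_x-\beta)^2+(t-\gamma)^2}$ can be negative (nothing prevents $\beta>\sigma_x$ for a zero far from $t$), and you give no lower bound on their total. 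The paper resolves both issues simultaneously by never separating the failing zeros out: using the symmetry $\rho\mapsto(1-\beta)+i\gamma$ it shows each paired term in \eqref{Re} is at least $\frac{1}{4}\cdot\frac{\sigma_x-1/2}{(\sigma_x-\beta)^2+(t-\gamma)^2}$, hence $\sum_\rho\left((\sigma_x-\beta)^2+(t-\gamma)^2\right)^{-1}\leq\frac{4}{\sigma_x-1/2}\left|\frac{L'}{L}(\sigma_x+it,f)\right|+O\left(\frac{\log(|t|+q)}{\sigma_x-1/2}\right)$, and it bounds \emph{every} zero's contribution to Lemma \ref{lemma 1} (failing zeros included, via $|t-\gamma|>x^{3|\beta-1/2|}/\log x$) by $2\log x\,x^{1/4-\sigma/2}\left((\sigma_x-\beta)^2+(t-\gamma)^2\right)^{-1}$; the factor $(\sigma_x-1/2)^{-1}\leq\frac{\log x}{10}$ then supplies exactly the remaining power of $\log x$ with no loss. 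Some such device is needed; the counting argument alone cannot recover the stated error term.
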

\begin{proof}
This proof follows \cite[Lemma 4.3]{LS} closely, and we give the details here for completeness.
By \eqref{Re}, we have
\bea\label{Re sigma_x}
\Re\frac{L'}{L}(\sigma_x+it,f)=\sum_\rho\frac{\sigma_x-\beta}{(\sigma_x-\beta)^2+(t-\gamma)^2}+O(\log(|t|+q)).
\eea
Moreover, if $\rho=\beta+i\gamma$ is zero of $L(s,f)$, then $(1-\beta)+i\gamma$ is also a zero of $L(s,f)$.
Thus we have
\begin{equation}\label{equation 1}
\begin{split}
&\sum_\rho\frac{\sigma_x-\beta}{(\sigma_x-\beta)^2+(t-\gamma)^2}\\
=&\frac{1}{2}\left(\sum_\rho\frac{\sigma_x-\beta}{(\sigma_x-\beta)^2+(t-\gamma)^2}
+\sum_\rho\frac{\sigma_x-(1-\beta)}{(\sigma_x-(1-\beta))^2+(t-\gamma)^2}\right)\\
=&\left(\sigma_x-\frac{1}{2}\right)\sum_\rho\frac{(\sigma_x-1/2)^2-(\beta-1/2)^2+(t-\gamma)^2}
{((\sigma_x-\beta)^2+(t-\gamma)^2)((\sigma_x-1+\beta)^2+(t-\gamma)^2)}.
\end{split}
\end{equation}
If $|\beta-1/2|\leq \frac{\sigma_x-1/2}{2}$, then
\begin{equation*}
\begin{split}
(\sigma_x-1/2)^2-(\beta-1/2)^2&\geq \frac{1}{2}\big((\sigma_x-1/2)^2+(\beta-1/2)^2\big)\\
&=\frac{1}{4}\big((\sigma_x-\beta)^2+(\sigma_x-1+\beta)^2\big).
\end{split}
\end{equation*}
Thus,
\bea\label{bound 1}
(\sigma_x-1/2)^2-(\beta-1/2)^2+(t-\gamma)^2\geq\frac{1}{4}\big((\sigma_x-1+\beta)^2+(t-\gamma)^2\big).
\eea
If $|\beta-1/2|> \frac{\sigma_x-1/2}{2}$, then by \eqref{sigma_x} and \eqref{condition 1} we have
\bna
|t-\gamma|>\frac{x^{3|\beta-1/2|}}{\log x}>3|\beta-1/2|.
\ena
Thus,
\begin{equation}\label{bound 2}
\begin{split}
&(\sigma_x-1/2)^2-(\beta-1/2)^2+(t-\gamma)^2\\
&=\big((\sigma_x-1/2)^2+(\beta-1/2)^2\big)+(t-\gamma)^2-2(\beta-1/2)^2\\
&\geq\frac{1}{2}\big((\sigma_x-\beta)^2+(\sigma_x-1+\beta)^2\big)+\frac{7}{9}(t-\gamma)^2\\
&\geq\frac{1}{4}\big((\sigma_x-1+\beta)^2+(t-\gamma)^2\big).
\end{split}
\end{equation}
Combining \eqref{bound 1}, \eqref{bound 2} and \eqref{equation 1}, we have
\bna
\sum_\rho\frac{\sigma_x-\beta}{(\sigma_x-\beta)^2+(t-\gamma)^2}
\geq\frac{1}{4}(\sigma_x-1/2)\sum_\rho\frac{1}
{((\sigma_x-\beta)^2+(t-\gamma)^2)}.
\ena
Using this bound and \eqref{Re sigma_x}, we obtain
\bea\label{step 1}
\sum_\rho\frac{1}{((\sigma_x-\beta)^2+(t-\gamma)^2)}
\leq \frac{4}{\sigma_x-1/2}\left|\frac{L'}{L}(\sigma_x+it,f)\right|
+O\left(\frac{\log(|t|+q)}{\sigma_x-1/2}\right).
\eea
On the other hand, by Lemma \ref{lemma 1}, we have
\begin{equation}\label{step 2}
\begin{split}
\frac{L'}{L}(\sigma+it,f)=&-\sum_{n\leq x^3} \frac{C_f(n)\Lambda_x(n)}{n^{\sigma+it}}
+\frac{w(x,\sigma,t)}{\log^2x}\sum_\rho\frac{x^{\beta-\sigma}(1+x^{\beta-\sigma})^2}
{\big((\sigma-\beta)^2+(t-\gamma)^2\big)^{3/2}}\\
&+O\left(\frac{x^{-1/2-\sigma}}{\log^2x}\right),
\end{split}
\end{equation}
with $|w(x,\sigma,t)|\leq1$.
Next we claim that
\bea\label{claim}
\frac{x^{\beta-\sigma}(1+x^{\beta-\sigma})^2}
{\big((\sigma-\beta)^2+(t-\gamma)^2\big)^{3/2}}
\leq 2\log x\frac{x^{1/4-\sigma/2}}{(\beta-\sigma_x)^2+(t-\gamma)^2}.
\eea
If $\beta\leq \frac{\sigma_x+1/2}{2}$, then we have
\begin{equation*}
\begin{split}
\frac{x^{\beta-\sigma}(1+x^{\beta-\sigma})^2}
{\big((\sigma-\beta)^2+(t-\gamma)^2\big)^{3/2}}&\leq
\frac{4x^{1/4-\sigma/2}}
{(\sigma_x-\beta)\big((\sigma_x-\beta)^2+(t-\gamma)^2\big)}\\
&\leq \frac{8}{\sigma_x-1/2}\frac{x^{1/4-\sigma/2}}
{\big((\sigma_x-\beta)^2+(t-\gamma)^2\big)}\\
&\leq \frac{4}{5}\log x\frac{x^{1/4-\sigma/2}}
{\big((\sigma_x-\beta)^2+(t-\gamma)^2\big)}.
\end{split}
\end{equation*}
If $\beta> \frac{\sigma_x+1/2}{2}$, then we have
\bna
|t-\gamma|>\frac{x^{3|\beta-1/2|}}{\log x}>3|\beta-1/2|\geq 3|\beta-\sigma_x|.
\ena
Thus, $(t-\gamma)^2>\frac{8}{9}\big((\beta-\sigma_x)^2+(t-\gamma)^2\big)$.
Hence
\begin{equation*}
\begin{split}
\frac{x^{\beta-\sigma}(1+x^{\beta-\sigma})^2}
{\big((\sigma-\beta)^2+(t-\gamma)^2\big)^{3/2}}&\leq
\frac{x^{\beta-\sigma}(1+x^{\beta-1/2})^2}
{|t-\gamma|(t-\gamma)^2}
\leq\frac{\log x}{x^{3|\beta-1/2|}}\frac{9}{8}
\frac{x^{\beta-\sigma}(1+x^{\beta-1/2})^2}
{(\beta-\sigma_x)^2+(t-\gamma)^2}\\
&= \frac{9}{8}(\log x)(1+x^{-(\beta-1/2)})^2\frac{x^{1/2-\sigma}}
{(\sigma_x-\beta)^2+(t-\gamma)^2}\\
&\leq \frac{9}{8}(\log x)(1+e^{-5})^2\frac{x^{1/2-\sigma}}
{(\sigma_x-\beta)^2+(t-\gamma)^2}\\
&<2(\log x)\frac{x^{1/2-\sigma}}
{(\sigma_x-\beta)^2+(t-\gamma)^2}.
\end{split}
\end{equation*}
Using \eqref{step 1} and \eqref{claim}, we have
\begin{equation*}
\begin{split}
&\sum_\rho\frac{x^{\beta-\sigma}(1+x^{\beta-\sigma})^2}
{\big((\sigma-\beta)^2+(t-\gamma)^2\big)^{3/2}}\\
\leq&\,8\log x\frac{x^{1/4-\sigma/2}}{\sigma_x-1/2}\left|\frac{L'}{L}(\sigma_x+it,f)\right|
+O\left(\frac{(\log x)x^{1/4-\sigma/2}\log(|t|+q)}{\sigma_x-1/2}\right)\\
\leq&\,\frac{4}{5}(\log x)^2x^{1/4-\sigma/2}\left|\frac{L'}{L}(\sigma_x+it,f)\right|
+O\left((\log x)^2x^{1/4-\sigma/2}\log(|t|+q)\right).
\end{split}
\end{equation*}
Inserting this bound into \eqref{step 2}, we have
\begin{equation}\label{step 3}
\begin{split}
\frac{L'}{L}(\sigma+it,f)=&-\sum_{n\leq x^3} \frac{C_f(n)\Lambda_x(n)}{n^{\sigma+it}}
+\frac{4}{5}w'(x,\sigma,t)x^{1/4-\sigma/2}\frac{L'}{L}(\sigma_x+it,f)\\
&+O\left(x^{1/4-\sigma/2}\log(|t|+q)\right),
\end{split}
\end{equation}
with $|w'(x,\sigma,t)|\leq 1$.
By taking $\sigma=\sigma_x$,
\begin{equation*}
\begin{split}
\left(1-\frac{4}{5}w'(x,\sigma_x,t)x^{1/4-\sigma_x/2}\right)
\frac{L'}{L}(\sigma_x+it,f)
=O\left(\left|\sum_{n\leq x^3} \frac{C_f(n)\Lambda_x(n)}{n^{\sigma_x+it}}\right|\right)
+O\left(x^{1/4-\sigma_x/2}\log(|t|+q)\right).
\end{split}
\end{equation*}
Since $\left|1-\frac{4}{5}w'(x,\sigma_x,t)x^{1/4-\sigma_x/2}\right|
\geq 1-\left|\frac{4}{5}w'(x,\sigma_x,t)x^{1/4-\sigma_x/2}\right|\geq \frac{1}{5}$,
we have
\bea\label{step 4}
\frac{L'}{L}(\sigma_x+it,f)
=O\left(\left|\sum_{n\leq x^3} \frac{C_f(n)\Lambda_x(n)}{n^{\sigma_x+it}}\right|\right)
+O\left(x^{1/4-\sigma_x/2}\log(|t|+q)\right).
\eea
Inserting \eqref{step 4} into \eqref{step 3} and \eqref{step 1}, respectively, we complete the proof of lemma.
\end{proof}
The following theorem provides an approximation of $S(t,f)$.
\begin{theorem}\label{theorem 1}
For $t\neq 0$, and $x\geq 4$, we have
\begin{equation*}
\begin{split}
S(t,f)=&\frac{1}{\pi}\Im\sum_{n\leq x^3}\frac{C_f(n)\Lambda_x(n)}{n^{\sigma_x+it}\log n}+
O\left((\sigma_x-1/2)\left|\sum_{n\leq x^3} \frac{C_f(n)\Lambda_x(n)}{n^{\sigma_x+it}}\right|\right)\\
&+O\big((\sigma_x-1/2)\log(|t|+q)\big),
\end{split}
\end{equation*}
where $\sigma_x$ is defined by \eqref{sigma_x}.
\end{theorem}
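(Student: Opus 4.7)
The starting point is the Littlewood-type formula
\[
\pi S(t,f)=\arg L\bigl(\tfrac{1}{2}+it,f\bigr)=-\int_{1/2}^{\infty}\Im \tfrac{L'}{L}(\sigma+it,f)\,d\sigma,
\]
which follows from continuous variation of the argument along $\Im(s)=t$ together with $\arg L(+\infty+it,f)=0$; absolute convergence of the tail is supplied by the Dirichlet series of $L'/L$. My plan is to split this integral at $\sigma=\sigma_x$ and estimate each piece separately.

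For the tail $[\sigma_x,\infty)$, I would substitute the first formula of Lemma \ref{lemma 3} and interchange the finite sum with the integral. Using $\int_{\sigma_x}^{\infty}n^{-\sigma}\,d\sigma=n^{-\sigma_x}/\log n$, the Dirichlet polynomial part produces exactly the desired main term $\tfrac{1}{\pi}\Im\sum_{n\leq x^3}C_f(n)\Lambda_x(n)/(n^{\sigma_x+it}\log n)$. The error term $x^{1/4-\sigma/2}\bigl(|D|+\log(|t|+q)\bigr)$, where $D:=\sum_{n\leq x^3}C_f(n)\Lambda_x(n)n^{-\sigma_x-it}$, integrates to $\ll(1/\log x)(|D|+\log(|t|+q))$, which is absorbed into $\ll(\sigma_x-\tfrac{1}{2})(|D|+\log(|t|+q))$ by the defining inequality $\sigma_x-\tfrac{1}{2}\geq 10/\log x$ from \eqref{sigma_x}.

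For the head $[1/2,\sigma_x]$, set $F=L'/L$ and write
\[
-\int_{1/2}^{\sigma_x}\!\Im F(\sigma+it)\,d\sigma=-(\sigma_x-\tfrac{1}{2})\Im F(\sigma_x+it)-\int_{1/2}^{\sigma_x}\!\Im\bigl[F(\sigma+it)-F(\sigma_x+it)\bigr]d\sigma.
\]
The first term on the right is $O\bigl((\sigma_x-\tfrac{1}{2})(|D|+\log(|t|+q))\bigr)$ by bound \eqref{step 4} from the proof of Lemma \ref{lemma 3}. For the second, the first identity of Lemma \ref{lemma 2} identifies the integrand with $\Im\sum_\rho\bigl(\tfrac{1}{\sigma+it-\rho}-\tfrac{1}{\sigma_x+it-\rho}\bigr)+O(1)$, reducing matters to controlling $\sum_\rho A(\rho)$, where $A(\rho):=\int_{1/2}^{\sigma_x}\Im\bigl(\tfrac{1}{\sigma+it-\rho}-\tfrac{1}{\sigma_x+it-\rho}\bigr)d\sigma$.

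The main obstacle is showing $\sum_\rho|A(\rho)|\ll(\sigma_x-\tfrac{1}{2})(|D|+\log(|t|+q))$. By Fubini one has
\[
A(\rho)=-\int_{1/2}^{\sigma_x}(u-\tfrac{1}{2})g_\rho'(u)\,du,\qquad g_\rho(u):=\Im\tfrac{1}{u+it-\rho},
\]
and the AM--GM inequality gives $|g_\rho'(u)|\leq 1/\bigl((u-\beta)^2+(t-\gamma)^2\bigr)$, hence $|A(\rho)|\leq(\sigma_x-\tfrac{1}{2})\int_{1/2}^{\sigma_x}du/\bigl((u-\beta)^2+(t-\gamma)^2\bigr)$. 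I would then split the zero sum according to whether $\rho$ satisfies the proximity condition \eqref{condition 1} and further partition by the size of $|t-\gamma|$ relative to $\sigma_x-\tfrac{1}{2}$. In each range the integral estimate combines with the density bound supplied by the second formula of Lemma \ref{lemma 3},
\[
\sum_\rho\frac{\sigma_x-\tfrac{1}{2}}{(\sigma_x-\beta)^2+(t-\gamma)^2}\ll|D|+\log(|t|+q),
\]
to yield the desired bound $O\bigl((\sigma_x-\tfrac{1}{2})(|D|+\log(|t|+q))\bigr)$. This balancing of arctan-type quantities against the density sum — precisely the motivation behind the definition \eqref{sigma_x} of $\sigma_x$, which ensures separation $\sigma_x>\beta$ for proximity zeros and an exponentially large lower bound on $|t-\gamma|$ for the rest — is the only genuinely delicate step; everything else is setup and bookkeeping.
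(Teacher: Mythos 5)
Your decomposition is exactly the paper's: the Littlewood integral split at $\sigma_x$ into the tail $J_1$, the rectangle term $J_2=-(\sigma_x-\tfrac12)\Im\frac{L'}{L}(\sigma_x+it,f)$, and the difference integral $J_3$, with Lemma \ref{lemma 3} handling $J_1$ and $J_2$ and Lemma \ref{lemma 2} plus the density bound handling $J_3$. The treatments of $J_1$ and $J_2$ are correct as written. The one place where your argument as stated would fail is the AM--GM step in $J_3$. Writing $g_\rho(u)=\Im\frac{1}{u+it-\rho}=\frac{-(t-\gamma)}{(u-\beta)^2+(t-\gamma)^2}$, the bound $|g_\rho'(u)|\leq\big((u-\beta)^2+(t-\gamma)^2\big)^{-1}$ discards the factor $|t-\gamma|$, and this factor is essential: for a zero with $|\beta-\tfrac12|\leq\tfrac12(\sigma_x-\tfrac12)$ and $|t-\gamma|$ very small, your resulting estimate
\begin{equation*}
|A(\rho)|\leq(\sigma_x-\tfrac12)\int_{1/2}^{\sigma_x}\frac{\dd u}{(u-\beta)^2+(t-\gamma)^2}\asymp\frac{\sigma_x-\tfrac12}{|t-\gamma|}
\end{equation*}
blows up as $|t-\gamma|\to 0$, whereas the weight this zero carries in the density sum $\sum_\rho(\sigma_x-\tfrac12)\big((\sigma_x-\beta)^2+(t-\gamma)^2\big)^{-1}$ stays bounded by $\asymp(\sigma_x-\tfrac12)^{-1}$. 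No partition by the size of $|t-\gamma|$ can bridge that gap using this inequality alone, so the claimed reduction to the density bound does not go through in the range $|t-\gamma|\ll\sigma_x-\tfrac12$.

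The repair is to \emph{not} symmetrize away $|t-\gamma|$: for zeros with $|\beta-\tfrac12|\leq\tfrac12(\sigma_x-\tfrac12)$ one keeps the numerator and uses the arctan evaluation $\int_{-\infty}^{\infty}|t-\gamma|\big((\sigma-\beta)^2+(t-\gamma)^2\big)^{-1}\dd\sigma\leq\pi$, which gives $|A(\rho)|\ll\sigma_x-\tfrac12$ uniformly (equivalently $|A(\rho)|=O(1)$ per near-diagonal zero, with the count of such zeros controlled by the density sum); for zeros with $|\beta-\tfrac12|>\tfrac12(\sigma_x-\tfrac12)$ one invokes \eqref{condition 1} to get $|t-\gamma|>3|\beta-\tfrac12|$ and hence $|t-\gamma|\gtrsim|\sigma_x-\beta|$, after which the integral is again $\ll\sigma_x-\tfrac12$. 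This is precisely the case analysis the paper performs on $\int_{1/2}^{\sigma_x}|\sigma+\sigma_x-2\beta|\,|t-\gamma|\big((\sigma-\beta)^2+(t-\gamma)^2\big)^{-1}\dd\sigma$, yielding the factor $10(\sigma_x-\tfrac12)$ that multiplies the density sum. With that substitution your proof closes; everything else matches the paper's argument.
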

\begin{proof}
We have
\begin{equation*}
\begin{split}
\pi S(t,f)&=-\int_{1/2}^\infty\Im \frac{L'}{L}(\sigma+it,f)\mathrm{d}\sigma\\
&=-\int_{\sigma_x}^\infty\Im \frac{L'}{L}(\sigma+it,f)\mathrm{d}\sigma
-(\sigma_x-1/2)\Im \frac{L'}{L}(\sigma_x+it,f)\\
&\quad+\int_{1/2}^{\sigma_x}\Im\left( \frac{L'}{L}(\sigma_x+it,f)-\frac{L'}{L}(\sigma+it,f)\right)\mathrm{d}\sigma\\
&=:\, J_1+J_2+J_3.
\end{split}
\end{equation*}
By Lemma \ref{lemma 3}, we have
\begin{equation*}
\begin{split}
J_1=&-\int_{\sigma_x}^\infty\Im \frac{L'}{L}(\sigma+it,f)\mathrm{d}\sigma\\
=&\int_{\sigma_x}^\infty\Im\sum_{n\leq x^3} \frac{C_f(n)\Lambda_x(n)}{n^{\sigma+it}}\mathrm{d}\sigma
+O\left(\left|\sum_{n\leq x^3} \frac{C_f(n)\Lambda_x(n)}{n^{\sigma_x+it}}\right|
\int_{\sigma_x}^\infty x^{1/4-\sigma/2}\mathrm{d}\sigma\right)\\
&+O\left(\log(|t|+q)\int_{\sigma_x}^\infty x^{1/4-\sigma/2}\mathrm{d}\sigma\right)\\
=&\Im\sum_{n\leq x^3} \frac{C_f(n)\Lambda_x(n)}{n^{\sigma_x+it}\log n}
+O\left(\frac{1}{\log x}\left|\sum_{n\leq x^3} \frac{C_f(n)\Lambda_x(n)}{n^{\sigma_x+it}}\right|\right)
+O\left(\frac{\log(|t|+q)}{\log x}\right).
\end{split}
\end{equation*}
Taking $\sigma=\sigma_x$ in Lemma \ref{lemma 3}, we have
\begin{equation*}
\begin{split}
J_2\leq&\,(\sigma_x-1/2)\left|\frac{L'}{L}(\sigma_x+it,f)\right|\\
\ll&\,(\sigma_x-1/2)\left|\sum_{n\leq x^3} \frac{C_f(n)\Lambda_x(n)}{n^{\sigma_x+it}}\right|
+(\sigma_x-1/2)\log(|t|+q).
\end{split}
\end{equation*}
We apply Lemma \ref{lemma 2} to bound $J_3$ and get
\begin{equation*}
\begin{split}
&\Im\left( \frac{L'}{L}(\sigma_x+it,f)-\frac{L'}{L}(\sigma+it,f)\right)\\
=&\sum_\rho \frac{(\sigma_x-\sigma)(\sigma+\sigma_x-2\beta)(t-\gamma)}
{\big((\sigma_x-\beta)^2+(t-\gamma)^2\big)\big((\sigma-\beta)^2+(t-\gamma)^2\big)}
+O(1).
\end{split}
\end{equation*}
Hence
\begin{equation*}
\begin{split}
|J_3|\leq&\sum_\rho\int_{1/2}^{\sigma_x}
 \frac{(\sigma_x-\sigma)|\sigma+\sigma_x-2\beta||t-\gamma|}
{\big((\sigma_x-\beta)^2+(t-\gamma)^2\big)\big((\sigma-\beta)^2+(t-\gamma)^2\big)}\mathrm{d}\sigma
+O(\sigma_x-1/2)\\
\leq&\sum_\rho\frac{\sigma_x-1/2}{(\sigma_x-\beta)^2+(t-\gamma)^2}\int_{1/2}^{\sigma_x}
 \frac{|\sigma+\sigma_x-2\beta||t-\gamma|}
{(\sigma-\beta)^2+(t-\gamma)^2}\mathrm{d}\sigma
+O(\sigma_x-1/2).
\end{split}
\end{equation*}
If $|\beta-1/2|\leq\frac{1}{2}(\sigma_x-1/2)$, then for $1/2\leq\sigma\leq\sigma_x$,
\begin{equation*}
\begin{split}
|\sigma+\sigma_x-2\beta|&=|(\sigma-1/2)+(\sigma_x-1/2)-2(\beta-1/2)|\\
&\leq |\sigma-1/2|+|\sigma_x-1/2|+2|\beta-1/2|\leq 3(\sigma_x-1/2).
\end{split}
\end{equation*}
Thus,
\begin{equation*}
\begin{split}
\int_{1/2}^{\sigma_x}
 \frac{|\sigma+\sigma_x-2\beta||t-\gamma|}
{(\sigma-\beta)^2+(t-\gamma)^2}\mathrm{d}\sigma
&\leq 3(\sigma_x-1/2)\int_{-\infty}^\infty\frac{|t-\gamma|}
{(\sigma-\beta)^2+(t-\gamma)^2}\mathrm{d}\sigma\\
&\leq 10(\sigma_x-1/2).
\end{split}
\end{equation*}
If $|\beta-1/2|>\frac{1}{2}(\sigma_x-1/2)$, then
\bna
|t-\gamma|>\frac{x^{3|\beta-1/2|}}{\log x}>3|\beta-1/2|
\ena
and for $1/2\leq\sigma\leq\sigma_x$,
\begin{equation*}
\begin{split}
|\sigma+\sigma_x-2\beta|\leq |\sigma-1/2|+|\sigma_x-1/2|+2|\beta-1/2|\leq 6|\beta-1/2|.
\end{split}
\end{equation*}
Thus,
\begin{equation*}
\begin{split}
\int_{1/2}^{\sigma_x}
 \frac{|\sigma+\sigma_x-2\beta||t-\gamma|}
{(\sigma-\beta)^2+(t-\gamma)^2}\mathrm{d}\sigma
&<\int_{1/2}^{\sigma_x}\frac{|\sigma+\sigma_x-2\beta|}
{|t-\gamma|}\mathrm{d}\sigma\\
&\leq \int_{1/2}^{\sigma_x}\frac{6|\beta-1/2|}
{3|\beta-1/2|}\mathrm{d}\sigma=2(\sigma_x-1/2).
\end{split}
\end{equation*}
By Lemma \ref{lemma 3}, we obtain
\begin{equation*}
\begin{split}
|J_3|\leq&\,10(\sigma_x-1/2)\sum_\rho\frac{\sigma_x-1/2}{(\sigma_x-\beta)^2+(t-\gamma)^2}
+O(\sigma_x-1/2)\\
=&\,O\left((\sigma_x-1/2)\left|\sum_{n\leq x^3} \frac{C_f(n)\Lambda_x(n)}{n^{\sigma_x+it}}\right|\right)
+O\big((\sigma_x-1/2)\log(|t|+q)\big).
\end{split}
\end{equation*}
Thus we complete the proof of lemma.
\end{proof}
\begin{theorem}
Under the \textrm{GRH}, we have
\bna
S(t,f)\ll\frac{\log(|t|+q)}{\log\log(|t|+q)}.
\ena
\end{theorem}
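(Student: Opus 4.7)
The plan is to apply Theorem \ref{theorem 1} with $x = \log(|t|+q)$. Under the GRH, every nontrivial zero $\rho = \beta+i\gamma$ of $L(s,f)$ lies on $\beta = 1/2$, so $|\beta - 1/2| = 0$ in the definition \eqref{sigma_x}, forcing $\sigma_x = 1/2 + 10/\log x = 1/2 + 10/\log\log(|t|+q)$. With this choice the error term $O((\sigma_x - 1/2)\log(|t|+q))$ in Theorem \ref{theorem 1} contributes $O(\log(|t|+q)/\log\log(|t|+q))$, which already matches the target.

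The remaining task is to bound the two Dirichlet polynomial contributions
\begin{equation*}
S_1 := \sum_{n \le x^3} \frac{C_f(n)\Lambda_x(n)}{n^{\sigma_x+it}\log n}, \qquad S_2 := \sum_{n \le x^3} \frac{C_f(n)\Lambda_x(n)}{n^{\sigma_x+it}}.
\end{equation*}
The sum $S_2$ is identified by Lemma \ref{lemma 1} with $-L'/L(\sigma_x+it,f)$ modulo an error term whose size is controlled under the GRH by the factor $|x^{\rho-(\sigma_x+it)}| = e^{-10}$ appearing in each term of the zero sum. The sum $S_1$ admits the integral representation
\begin{equation*}
S_1 = \int_{\sigma_x}^\infty \sum_{n \le x^3} \frac{C_f(n)\Lambda_x(n)}{n^{u+it}}\,du,
\end{equation*}
via the identity $n^{-\sigma_x}/\log n = \int_{\sigma_x}^\infty n^{-u}\,du$; combining with Lemma \ref{lemma 1} applied at each $u$ and with the fact that $L(s,f) \to 1$ as $\Re(s) \to \infty$, one obtains $S_1 = \log L(\sigma_x+it,f) + (\text{integrated error})$.

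Thus the theorem reduces to the GRH-conditional estimates
\begin{equation*}
|\log L(\sigma_x+it,f)| \ll \frac{\log(|t|+q)}{\log\log(|t|+q)} \quad \text{and} \quad |L'/L(\sigma_x+it,f)| \ll \log(|t|+q),
\end{equation*}
which are the $\mathrm{GL}_2$ analogues of the classical Littlewood--Selberg bounds for $\zeta$ (cf. Titchmarsh, \S 14.2). This is the main obstacle. I would establish them by applying the Borel--Carath\'eodory inequality to the analytic function $\log L(s,f)$ on a disk centered at $2+it$ and contained in the zero-free region $\Re(s) > 1/2$ afforded by the GRH. On the boundary of such a disk, $\Re\log L = \log|L|$ is controlled by the Phragm\'en--Lindel\"of principle applied between the absolutely convergent region $\Re(s) = 2$ and the critical line (where the functional equation gives polynomial growth); an optimal choice of the inner and outer radii produces the decisive $1/\log\log(|t|+q)$ saving. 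The derivative bound on $L'/L$ then follows from Cauchy's integral formula. Combining these estimates with the reductions above completes the proof.
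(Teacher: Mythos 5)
Your overall plan --- reduce to Littlewood--Selberg type bounds for $\log L$ and $L'/L$ under GRH via Borel--Carath\'eodory and Phragm\'en--Lindel\"of --- is a known route to estimates of this shape, but as written it is not a proof: the two ``GRH-conditional estimates'' you isolate are precisely the content of the theorem in disguise, and you explicitly defer them as ``the main obstacle,'' offering only a plan for how one would attack them. There is also a concrete error in the reduction itself: the term-by-term factor $|x^{\rho-(\sigma_x+it)}|=e^{-10}$ does not control the zero sum in Lemma \ref{lemma 1}. Each zero with $|\gamma-t|\ll 1/\log x$ contributes roughly $\frac{1}{\log^{2}x}\cdot|\rho-s|^{-3}\gg \log x$ to that sum (since $|\rho-s|$ can be as small as $10/\log x$), and there can be on the order of $\log(q(|t|+2))/\log x$ such zeros, so the discrepancy between $S_2$ and $-\frac{L'}{L}(\sigma_x+it,f)$ is of size up to $O(\log(q(|t|+2)))$, not $O(e^{-10})$. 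That error happens to be absorbable after multiplying by $\sigma_x-1/2$, but it must be tracked, and your write-up does not do so.

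The deeper issue is that the heavy machinery is unnecessary, and you were pushed into it by choosing $x$ too large. The paper's proof is three lines: under GRH one has $\sigma_x=\frac12+\frac{10}{\log x}$, and then \emph{both} Dirichlet polynomials in Theorem \ref{theorem 1} are bounded trivially using Deligne's bound \eqref{Deligne bound}, namely $|C_f(p^m)\Lambda_x(p^m)|\ll\log p$, giving
\begin{equation*}
\left|\Im\sum_{n\leq x^3}\frac{C_f(n)\Lambda_x(n)}{n^{\sigma_x+it}\log n}\right|\ll\sum_{p\leq x^3}p^{-1/2}\ll\frac{x^{3/2}}{\log x},\qquad (\sigma_x-1/2)\left|\sum_{n\leq x^3}\frac{C_f(n)\Lambda_x(n)}{n^{\sigma_x+it}}\right|\ll\frac{x^{3/2}}{\log x},
\end{equation*}
while the remaining error is $(\sigma_x-1/2)\log(|t|+q)\ll\log(|t|+q)/\log x$. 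Taking $x=\big(\log(|t|+q)\big)^{2/3}$ makes all three terms $\ll\log(|t|+q)/\log\log(|t|+q)$. With your choice $x=\log(|t|+q)$ the trivial bound yields only $\big(\log(|t|+q)\big)^{3/2}/\log\log(|t|+q)$, which is why you had to reach for $\log L$ and $L'/L$; shrinking $x$ to a smaller power of the logarithm removes the need for any complex-analytic input beyond what Theorem \ref{theorem 1} already contains.
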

\begin{proof}
We have $\sigma_x=\frac{1}{2}+\frac{10}{\log x}$ by mean of the GRH.
By \eqref{Deligne bound},
\bna
|C_f(p^m)\Lambda_x(p^m)|\ll \log p.
\ena
Thus,
\bna
\left|\Im\sum_{n\leq x^3}\frac{C_f(n)\Lambda_x(n)}{n^{\sigma_x+it}\log n}\right|
\ll \sum_{p\leq x^3}p^{-1/2}\ll \frac{x^{3/2}}{\log x},
\ena
and
\bna
(\sigma_x-1/2)\left|\sum_{n\leq x^3} \frac{C_f(n)\Lambda_x(n)}{n^{\sigma_x+it}}\right|
\ll \frac{1}{\log x}\sum_{p\leq x^3}p^{-1/2}\log p\ll \frac{x^{3/2}}{\log x}.
\ena
Then the theorem follows by taking $x=\big(\log(|t|+q)\big)^{2/3}$ in Theorem \ref{theorem 1}.
\end{proof}
\section{Proof of Theorems \ref{main-theorem} and \ref{main-corollary}}
\begin{lemma}\label{lemma 6}
Let $t>0$ be given. For prime number $q$ sufficiently large and $x=q^{\delta/3}$ with
$0<\delta<\frac{6c}{8n+3A}$, we have
\bna
\sum_{f\in H_2(q)}\frac{(\sigma_{x,f}-1/2)^{4n}x^{4n(\sigma_{x,f}-1/2)}}{4\pi \left<f,f\right>}
\ll_{t,n,\delta,A}\frac{1}{(\log q)^{4n}},
\ena
where $\sigma_{x,f}$ is defined by \eqref{sigma_x}, $c$ and $A$ are as in Lemma \ref{zero-density}.
\end{lemma}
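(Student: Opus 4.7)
The plan is to decompose $H_2(q)$ according to the size of $\sigma_{x,f}$ and then apply Lemma~\ref{zero-density}. Write $b_f := (\sigma_{x,f}-1/2)/2$, so \eqref{sigma_x} gives $b_f \geq 5/\log x$, with equality precisely when no zero $\rho = \beta+i\gamma$ of $L(s,f)$ with $|\beta-1/2| > 5/\log x$ also satisfies $|t-\gamma| \leq x^{3|\beta-1/2|}/\log x$. I will treat the trivial set $\{f : b_f = 5/\log x\}$ and the non-trivial set $\{f: b_f > 5/\log x\}$ separately.

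For the trivial set, a direct computation gives
\[
(\sigma_{x,f}-1/2)^{4n} x^{4n(\sigma_{x,f}-1/2)} = \Big(\tfrac{10}{\log x}\Big)^{4n} e^{40n} \ll_n (\log q)^{-4n},
\]
since $x = q^{\delta/3}$ forces $x^{40n/\log x} = e^{40n}$ and $\log x \asymp \log q$. Combined with the bound $\sum_f \omega_f \ll 1$ that follows from Lemma~\ref{corollary} at $m=n=1$, the trivial subset contributes $\ll (\log q)^{-4n}$.

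For the non-trivial subset I carry out a dyadic decomposition. Fix a small $\eta > 0$ (to be chosen in terms of $n,A,c,\delta$), set $B_k := 5(1+\eta)^k/\log x$, and group $f$ by $b_f \in (B_k, B_{k+1}]$ for $0 \leq k \leq K$, where $K \ll_\eta \log\log q$ (since $b_f \leq 1/2$). For each such $f$, the definition of $\sigma_{x,f}$ supplies a zero $\rho = \beta+i\gamma$ of $L(s,f)$ with $|\beta-1/2| = b_f \leq B_{k+1}$ and $|t-\gamma| \leq x^{3B_{k+1}}/\log x$; by the functional equation I may take $\beta > 1/2$, so
\[
\sum_{\substack{f \in H_2(q) \\ b_f \in (B_k, B_{k+1}]}} \omega_f \;\leq\; \sum_{f \in H_2(q)} \omega_f\, N\!\Big(f;\,\tfrac{1}{2} + B_k,\, t + \tfrac{x^{3B_{k+1}}}{\log x}\Big).
\]
Lemma~\ref{zero-density} with $\sigma = 1/2 + B_k$ and $T_k = t + x^{3B_{k+1}}/\log x \ll_t 1 + q^{\delta B_{k+1}}$ bounds the right side by $\ll_t (\log q)(1 + q^{\delta B_{k+1}})^{A+1} q^{-2cB_k}$. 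Multiplying by the pointwise estimate $(\sigma_{x,f}-1/2)^{4n} x^{4n(\sigma_{x,f}-1/2)} \leq (2B_{k+1})^{4n} q^{8n\delta B_{k+1}/3}$, the $k$-th dyadic piece contributes $\ll_t (2B_{k+1})^{4n} (\log q)\, q^{E(k)}$ where
\[
E(k) = \Big(\tfrac{8n+3A+3}{3}\Big)\delta B_{k+1} - 2c B_k = \frac{B_{k+1}}{1+\eta}\Big(\tfrac{(8n+3A+3)\delta(1+\eta)}{3} - 2c\Big).
\]

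The hypothesis $\delta < 6c/(8n+3A+3)$ is precisely $(8n+3A+3)\delta/3 < 2c$, so I can choose $\eta > 0$ small enough that $E(k) \leq -c'\, B_{k+1}$ for some fixed $c' = c'(n,A,c,\delta) > 0$. Setting $y_k := B_{k+1}\log q$ (growing geometrically from the constant $y_0 = 15(1+\eta)/\delta$), each term becomes $\ll 2^{4n} y_k^{4n} e^{-c'y_k} (\log q)^{1-4n}$, and the series over $k$ converges absolutely; its initial terms, in the regime where $B_{k+1}\log q = O(1)$ so that $q^{8n\delta B_{k+1}/3}$ is a constant, are handled instead by the cruder bound $(2B_{k+1})^{4n}\,\sum_f \omega_f \ll (\log q)^{-4n}$, which avoids the spurious $\log q$ loss from zero density where its exponential savings are weak. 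Combining the two ranges gives the stated bound $O_{t,n,\delta,A}((\log q)^{-4n})$. The main technical obstacle is the calibration of the dyadic step: only by letting $1+\eta \to 1$ does Lemma~\ref{zero-density} contribute its full exponent $2c$, rather than a lossy $c$ from naive doubling, which is precisely what separates the sharp threshold $6c/(8n+3A+3)$ from the weaker $3c/(8n+3A+3)$.
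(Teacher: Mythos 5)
Your strategy---stratify the family by the size of $b_f=(\sigma_{x,f}-1/2)/2$, bound the weighted count of each stratum by Lemma \ref{zero-density}, and sum---is the same one the paper uses (the paper slices the \emph{zeros} into shells $\beta-\tfrac12\in(\tfrac{j}{\log x},\tfrac{j+1}{\log x}]$ of additive width $1/\log x$ instead of partitioning the \emph{forms} geometrically, but both routes lead to the identical threshold $\delta<6c/(8n+3A+3)$). The gap is quantitative and sits exactly where you half-noticed it. For every stratum with $y_k=B_{k+1}\log q\ll\frac{1}{\delta}\log\log q$ you have $T_k\asymp_t 1$, so Lemma \ref{zero-density} returns the full factor $\log q$, and your $k$-th term is $\asymp y_k^{4n}e^{-c'y_k}(\log q)^{1-4n}$. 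The convergence of $\sum_k y_k^{4n}e^{-c'y_k}$ does not rescue you: that sum is a \emph{constant}, and it stays bounded below by a positive constant even after you delete any fixed number of initial terms, so the zero-density range alone contributes $\asymp(\log q)^{1-4n}$, not $(\log q)^{-4n}$. Your patch only covers $y_k=O(1)$; moreover the ``cruder bound'' as displayed silently drops the pointwise factor $x^{8nB_{k+1}}=e^{8n\delta y_k/3}$, which is no longer $O(1)$ once $y_k\to\infty$. If you optimize the crossover $Y_1$ between the trivial bound ($\ll Y_1^{4n}e^{8n\delta Y_1/3}(\log q)^{-4n}$) and the zero-density bound ($\ll Y_1^{4n}e^{-c'Y_1}(\log q)^{1-4n}$), the best you obtain is $(\log q)^{-4n+\theta}$ with $\theta=\frac{8n\delta/3}{c'+8n\delta/3}\in(0,1)$ --- a positive power of $\log q$ short of the claim.

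To be fair, the paper's own proof has the same defect: its displayed chain replaces $(1+|t|+e^{3(j+1)}/\log x)^A(|t|+e^{3(j+1)}/\log x)$ by $\ll_t e^{(3A+3)j}(\log x)^{-A-1}$, which is false for $5\leq j\leq\tfrac13\log\log x$ (there the left-hand side is $\asymp_t 1$ while the right-hand side is $o(1)$); carried out honestly, the paper's computation also yields only $O_t((\log q)^{1-4n})$. So your argument is no worse than the printed one, and you diagnosed part of the problem, but neither version establishes the stated $(\log q)^{-4n}$. The missing ingredient is a zero-density estimate localized to the window $|\gamma-t|\leq\Delta$, with the factor $(\log q)T$ replaced by something like $1+\Delta\log q$; that quantity is $O(1)$ precisely in the troublesome strata, where $\Delta=x^{3B_{k+1}}/\log x\ll 1$, and with such an input your decomposition would go through verbatim.
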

\begin{proof}
By the definition of $\sigma_{x,f}$,
\begin{equation*}
\begin{split}
&(\sigma_{x,f}-1/2)^{4n}x^{4n(\sigma_{x,f}-1/2)}\\
\leq&\left(\frac{10}{\log x}\right)^{4n}x^{40n/\log x}
+2^{4n+1}\sum_{\beta>\frac{1}{2}+\frac{5}{\log x}\atop |t-\gamma|\leq\frac{x^{3|\beta-1/2|}}{\log x}}
(\beta-1/2)^{4n}x^{8n(\beta-1/2)}.
\end{split}
\end{equation*}
On the other hand,
\begin{equation*}
\begin{split}
&\sum_{\beta>\frac{1}{2}+\frac{5}{\log x}\atop |t-\gamma|\leq\frac{x^{3|\beta-1/2|}}{\log x}}
(\beta-1/2)^{4n}x^{8n(\beta-1/2)}\\
\leq&\,\sum_{j=5}^{\frac{1}{2}\lfloor\log x\rfloor}\left(\frac{j+1}{\log x}\right)^{4n}
x^{8n\frac{j+1}{\log x}}\sum_{\frac{1}{2}+\frac{j}{\log x}<\beta\leq\frac{1}{2}+\frac{j+1}{\log x}
\atop |t-\gamma|\leq\frac{x^{3|\beta-1/2|}}{\log x}}1\\
\leq&\,\frac{1}{(\log x)^{4n}}\sum_{j=5}^{\frac{1}{2}\lfloor\log x\rfloor}\left(j+1\right)^{4n}
e^{8n(j+1)}N\left(f;\frac{1}{2}+\frac{j}{\log x}, |t|+\frac{e^{3(j+1)}}{\log x}\right).
\end{split}
\end{equation*}
By Lemma \ref{zero-density}, there exists an absolute constant $A>1$ such that
\begin{equation*}
\begin{split}
&\sum_{f\in H_2(q)}\frac{1}{4\pi \left<f,f\right>}
\sum_{\beta>\frac{1}{2}+\frac{5}{\log x}\atop |t-\gamma|\leq\frac{x^{3|\beta-1/2|}}{\log x}}
(\beta-1/2)^{4n}x^{8n(\beta-1/2)}\\
\leq&\,\frac{1}{(\log x)^{4n}}\sum_{j=5}^{\frac{1}{2}\lfloor\log x\rfloor}\left(j+1\right)^{4n}
e^{8n(j+1)}\sum_{f\in H_2(q)}\frac{1}{4\pi \left<f,f\right>}
N\left(f;\frac{1}{2}+\frac{j}{\log x}, |t|+\frac{e^{3(j+1)}}{\log x}\right)\\
\ll&\,\frac{1}{(\log x)^{4n}}\sum_{j=5}^{\infty}\left(j+1\right)^{4n}
e^{8n(j+1)}\left(1+|t|+\frac{e^{3(j+1)}}{\log x}\right)^A q^{-2c\frac{j}{\log x}}\log q\\
\ll&_{t,n,A,\delta}\,\frac{\log q}{(\log x)^{4n+A}}\sum_{j=5}^{\infty}\left(j+1\right)^{4n}
e^{(8n+3A-\frac{6c}{\delta})j}\\
\ll&_{t,n,A,\delta}\,\frac{1}{(\log q)^{4n}}
\end{split}
\end{equation*}
provided that $0<\delta<\frac{6c}{8n+3A}$. 
In addition, by Lemma \ref{corollary},
\begin{equation*}
\begin{split}
\sum_{f\in H_2(q)}\frac{1}{4\pi \left<f,f\right>}
\left(\frac{10}{\log x}\right)^{4n}x^{40n/\log x}
\ll\frac{1}{(\log x)^{4n}}\ll\frac{1}{(\log q)^{4n}}.
\end{split}
\end{equation*}
Thus we complete the proof of lemma.
\end{proof}

For a parameter $x = q^{\delta/3}$ with a sufficiently small $\delta > 0$ (so that $\log x \asymp \log q$), let
\bna
M(t,f):=\frac{1}{\pi}\Im\sum_{p\leq x^3}\frac{C_f(p)}{p^{1/2+it}},\quad
\text{and}\quad
R(t,f):=S(t,f)-M(t,f),
\ena
where $C_f(p)=\alpha_f(p)+\beta_f(p)=\lambda_f(p)$.
\begin{lemma}\label{lemma 5} We have
\begin{equation*}
\begin{split}
R(t,f)&=O\left(\left|\Im\sum_{p\leq x^3}
\frac{C_f(p)(\Lambda_x(p)-\Lambda(p))}{p^{1/2+it}\log p}\right|\right)
+O\left(\left|\Im\sum_{p\leq x^{3/2}}\frac{C_f(p^2)\Lambda_x(p^2)}{p^{1+2it}\log p}\right|\right)\\
&+O\left((\sigma_x-1/2)x^{\sigma_x-1/2}\int_{1/2}^\infty
x^{1/2-\sigma}\left|\sum_{p\leq x^3}
\frac{C_f(p)\Lambda_x(p)\log(xp)}{p^{\sigma+it}}\right|\mathrm{d}\sigma\right)\\
&+O\big((\sigma_x-1/2)\log(|t|+q)\big)+O(1).
\end{split}
\end{equation*}
\end{lemma}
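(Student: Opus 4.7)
The plan is to start from Theorem~\ref{theorem 1} and rewrite its main term so that $\pi M(t,f)$ is explicitly extracted, with the remainder matched term by term against the $O$-terms of Lemma~\ref{lemma 5}. Since $C_f(n)$ vanishes off prime powers, I first split
$\sum_{n\le x^3}C_f(n)\Lambda_x(n)/(n^{\sigma_x+it}\log n)$
into primes, prime squares and the tail $n=p^m$ with $m\ge 3$. The Deligne bound $|C_f(p^m)|\le 2$ makes the tail absolutely convergent, contributing $O(1)$.

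Next I shift the exponent from $\sigma_x$ back to $1/2$. For the prime-square part, the elementary bound $|p^{-2(\sigma_x-1/2)}-1|\le 2(\sigma_x-1/2)\log p$ produces the second $O$-term of the lemma together with a remainder of size $O((\sigma_x-1/2)\log x)$, absorbed into the fourth $O$-term. For the prime part, I decompose $\Lambda_x(p)=\Lambda(p)+(\Lambda_x(p)-\Lambda(p))$. The $\Lambda(p)=\log p$ component, shifted via the fundamental theorem of calculus
\[
p^{-\sigma_x+it}=p^{-1/2+it}-\int_{1/2}^{\sigma_x}(\log p)\,p^{-\sigma+it}\,d\sigma,
\]
yields $\pi M(t,f)$ plus an integral remainder. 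The correction piece $(\Lambda_x-\Lambda)$ yields the first $O$-term plus another integral remainder. The two remainders telescope into a single expression
$-\int_{1/2}^{\sigma_x}\sum_{p\le x^3}C_f(p)\Lambda_x(p)/p^{\sigma+it}\,d\sigma$.

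To match this shift integral, together with the Theorem~\ref{theorem 1} error $(\sigma_x-1/2)|\sum_n C_f(n)\Lambda_x(n)/n^{\sigma_x+it}|$, to the third $O$-term of Lemma~\ref{lemma 5}, I use the Laplace-type identity
\[
p^{-s}=\int_{s}^{\infty}x^{s-\sigma'}\log(xp)\,p^{-\sigma'}\,d\sigma',
\]
verified by differentiating the right-hand side in $s$. Substituting this into the inner $p$-sum at $s=\sigma+it$, interchanging the order of integration, and bounding the resulting inner integral over $\sigma$ by $(\sigma_x-1/2)x^{\sigma_x-1/2}x^{1/2-\sigma'}$ via the elementary inequality $1-e^{-A}\le A$ applied with $A=(\sigma_x-1/2)\log x$ recovers precisely the third $O$-term. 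The $n=p^m$, $m\ge 2$, contributions to the Theorem~\ref{theorem 1} error are estimated trivially by absolute convergence and absorbed into the $O(1)$ and $O((\sigma_x-1/2)\log(|t|+q))$ terms.

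The main obstacle is the third step: choosing the right integral representation for $p^{-s}$ that produces the specific $\log(xp)$ weight together with the $x^{\sigma_x-1/2}$ prefactor. A naive bound on $\int_{1/2}^{\sigma_x}|\sum_p\ldots|\,d\sigma$ by length times maximum would lose the $x^{1/2-\sigma}$ integrand structure exploited when Lemma~\ref{lemma 5} is later fed into the moment calculation, so the identity above and the subsequent interchange of integration are essential.
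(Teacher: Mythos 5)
Your proposal is correct and follows essentially the same route as the paper: starting from Theorem \ref{theorem 1}, splitting the prime-power sum into $m=1$, $m=2$ and $m\ge 3$ pieces, shifting the exponent from $\sigma_x$ to $1/2$, and controlling the resulting shift integral together with the error term of Theorem \ref{theorem 1} via the identity $p^{-a}=x^{a-1/2}\int_a^\infty x^{1/2-\sigma}\log(xp)\,p^{-\sigma}\,\mathrm{d}\sigma$, which is exactly the paper's key step. The only discrepancy is the immaterial sign typo $p^{-\sigma_x+it}$ in place of $p^{-\sigma_x-it}$ in your fundamental-theorem-of-calculus display.
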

\begin{proof}
By Theorem \ref{theorem 1},
\begin{equation*}
\begin{split}
&R(t,f)=S(t,f)-M(t,f)\\=&\,\frac{1}{\pi}\Im\sum_{p\leq x^3}
\frac{C_f(p)(\Lambda_x(p)p^{1/2-\sigma_x}-\Lambda(p))}{p^{1/2+it}\log p}
+\frac{1}{\pi}\Im\sum_{m=2}^\infty\sum_{p^m\leq x^3}\frac{C_f(p^m)\Lambda_x(p^m)}{p^{m(\sigma_x+it)}\log p^m}\\
&+O\left((\sigma_x-1/2)\left|\sum_{m=1}^\infty\sum_{p^m\leq x^3} \frac{C_f(p^m)
\Lambda_x(p^m)}{p^{m(\sigma_x+it)}}\right|\right)
+O\big((\sigma_x-1/2)\log(|t|+q)\big).
\end{split}
\end{equation*}
Using the bound \eqref{Deligne bound}, we deduce that
\begin{equation*}
\begin{split}
\sum_{m=3}^\infty\sum_{p^m\leq x^3}\frac{C_f(p^m)\Lambda_x(p^m)}{p^{m(\sigma_x+it)}}=O(1),\quad\text{and}\quad
\sum_{m=3}^\infty\sum_{p^m\leq x^3}\frac{C_f(p^m)\Lambda_x(p^m)}{p^{m(\sigma_x+it)}\log p}=O(1).
\end{split}
\end{equation*}
Note that
\begin{equation*}
\begin{split}
(\sigma_x-1/2)\left|\sum_{p\leq x^{3/2}} \frac{C_f(p^2)
\Lambda_x(p^2)}{p^{2(\sigma_x+it)}}\right| &\ll
(\sigma_x-1/2)\sum_{p\leq x^{3/2}}\frac{\log p}{p}\\
&\ll(\sigma_x-1/2)\log x\ll (\sigma_x-1/2)\log q.
\end{split}
\end{equation*}
Thus,
\begin{equation*}
\begin{split}
R(t,f)&=\frac{1}{\pi}\Im\sum_{p\leq x^3}
\frac{C_f(p)(\Lambda_x(p)-\Lambda(p))}{p^{1/2+it}\log p}
-\frac{1}{\pi}\Im\sum_{p\leq x^3}
\frac{C_f(p)\Lambda_x(p)(1-p^{1/2-\sigma_x})}{p^{1/2+it}\log p}\\
&+\frac{1}{\pi}\Im\sum_{p\leq x^{3/2}}\frac{C_f(p^2)\Lambda_x(p^2)}{p^{1+2it}\log p^2}
+\frac{1}{\pi}\Im\sum_{p\leq x^{3/2}}\frac{C_f(p^2)\Lambda_x(p^2)(p^{1-2\sigma_x}-1)}{p^{1+2it}\log p^2}\\
&+O\left((\sigma_x-1/2)\left|\sum_{p\leq x^3} \frac{C_f(p)
\Lambda_x(p)}{p^{\sigma_x+it}}\right|\right)
+O\big((\sigma_x-1/2)\log(|t|+q)\big)+O(1).
\end{split}
\end{equation*}
Note that for $1/2\leq a\leq \sigma_x$,
\begin{equation*}
\begin{split}
\left|\sum_{p\leq x^3} \frac{C_f(p)\Lambda_x(p)}{p^{a+it}}\right|
&=x^{a-1/2}\left|\int_a^\infty x^{1/2-\sigma}\sum_{p\leq x^3}
\frac{C_f(p)\Lambda_x(p)\log(xp)}{p^{\sigma+it}}\mathrm{d}\sigma\right|\\
&\leq x^{\sigma_x-1/2}\int_{1/2}^\infty x^{1/2-\sigma}\left|\sum_{p\leq x^3}
\frac{C_f(p)\Lambda_x(p)\log(xp)}{p^{\sigma+it}}\right|\mathrm{d}\sigma.
\end{split}
\end{equation*}
Thus,
\begin{equation*}
\begin{split}
&\left|\sum_{p\leq x^3}
\frac{C_f(p)\Lambda_x(p)(1-p^{1/2-\sigma_x})}{p^{1/2+it}\log p}\right|
=\left|\int_{1/2}^{\sigma_x} \sum_{p\leq x^3} \frac{C_f(p)\Lambda_x(p)}{p^{a+it}}\mathrm{d}a\right|\\
&\leq (\sigma_x-1/2)x^{\sigma_x-1/2}\int_{1/2}^\infty x^{1/2-\sigma}\left|\sum_{p\leq x^3}
\frac{C_f(p)\Lambda_x(p)\log(xp)}{p^{\sigma+it}}\right|\mathrm{d}\sigma,
\end{split}
\end{equation*}
and
\begin{equation*}
\begin{split}
(\sigma_x-1/2)\left|\sum_{p\leq x^3} \frac{C_f(p)
\Lambda_x(p)}{p^{\sigma_x+it}}\right|\leq (\sigma_x-1/2)x^{\sigma_x-1/2}\int_{1/2}^\infty
x^{1/2-\sigma}\left|\sum_{p\leq x^3}
\frac{C_f(p)\Lambda_x(p)\log(xp)}{p^{\sigma+it}}\right|\mathrm{d}\sigma.
\end{split}
\end{equation*}
Moreover,
\begin{equation*}
\begin{split}
\left|\sum_{p\leq x^{3/2}}\frac{C_f(p^2)\Lambda_x(p^2)(p^{1-2\sigma_x}-1)}{p^{1+2it}\log p}\right|
&\ll \sum_{p\leq x^{3/2}}\frac{1}{p}(1-p^{1-2\sigma_x})\ll \sum_{p\leq x^{3/2}}(\sigma_x-1/2)\frac{\log p}{p}\\
&\ll(\sigma_x-1/2)\log q.
\end{split}
\end{equation*}
Thus we complete the proof of lemma.
\end{proof}
\begin{proposition}\label{main propsition}
Let $t>0$ be given. For prime number $q$ sufficiently large and $x=q^{\delta/3}$ with sufficiently small $\delta>0$, we have
\begin{equation}\label{M(t,f) moment}
\begin{split}
\sum_{f\in H_2(q)}\frac{M(t,f)^n}{4\pi \left<f,f\right>}=C_n(\log\log q)^{n/2}+O_{t,n}\big((\log\log q)^{n/2-1}\big),
\end{split}
\end{equation}
and
\begin{equation}\label{R(t,f) moment}
\begin{split}
\sum_{f\in H_2(q)}\frac{|R(t,f)|^{2n}}{4\pi \left<f,f\right>}=O_{t,n}(1).
\end{split}
\end{equation}
Here $C_n$ is defined by
\bna
C_n=\begin{cases}
\frac{n!}{(n/2)!(2\pi)^{n}}, \,&\textit{if }\, n\,\, \text{is even},\\
0, \,& \textit{if }\, n \,\,\text{is odd}.
\end{cases}
\ena
\end{proposition}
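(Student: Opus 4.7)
The plan is to handle the two asymptotics in parallel, in both cases by expanding into prime sums and applying the averaging formula of Lemma \ref{corollary}; for \eqref{R(t,f) moment} we additionally invoke Lemma \ref{lemma 6} together with the Cauchy--Schwarz inequality.

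For \eqref{M(t,f) moment}, I would write $M(t,f)=(2\pi i)^{-1}\sum_{p\leq x^3}\lambda_f(p)(p^{-1/2-it}-p^{-1/2+it})$ (since $C_f(p)=\lambda_f(p)$), expand $M(t,f)^n$ as an $n$-fold sum over primes $p_1,\ldots,p_n\leq x^3$ and sign vectors $\epsilon\in\{\pm 1\}^n$, and apply $\sum_f\omega_f$. Using the Hecke relation \eqref{the Hecke relation} to decompose $\prod_j\lambda_f(p_j)$ into a combination of $\lambda_f(m)$, Lemma \ref{corollary} isolates the $m=1$ contribution up to a Kloosterman error of size $O(q^{-3/2+n\delta+\varepsilon})$, which is acceptable for $\delta$ small. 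The $m=1$ diagonal is supported on tuples in which every prime appears with even multiplicity; among such tuples, the perfect pairings (each prime appearing exactly twice) dominate, since a pair at prime $p$ summed over signs collapses to $p^{-1-2it}+p^{-1+2it}-2p^{-1}$, and only the $-2p^{-1}$ term grows under \eqref{classical bound 1} while the oscillatory pieces are $O_t(1)$ by partial summation against PNT combined with $\zeta(1\pm 2it)\neq 0$. Tuples containing some prime of multiplicity $\geq 4$ have strictly fewer free primes and thus contribute at most $(\log\log q)^{n/2-1}$. Combining the $n!/((n/2)!\,2^{n/2})$ perfect pairings with the prefactor $(2\pi i)^{-n}$ yields precisely $C_n(\log\log q)^{n/2}$; the contribution from $p=q$ is negligible since $|\lambda_f(q)|=q^{-1/2}$.

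For \eqref{R(t,f) moment} I use Lemma \ref{lemma 5} to split $R(t,f)$ into five pieces and bound the $2n$th moment of each, combining via Minkowski's inequality. The short-range sum $\Im\sum_{x<p\leq x^3}C_f(p)(\Lambda_x(p)-\Lambda(p))/(p^{1/2+it}\log p)$ is handled by the same pairing method, with main term proportional to $(\log\log x^3-\log\log x)^n=(\log 3)^n=O_n(1)$. For the square-prime sum $\Im\sum_{p\leq x^{3/2}}C_f(p^2)\Lambda_x(p^2)/(p^{1+2it}\log p)$, I substitute $C_f(p^2)=\lambda_f(p^2)-1$ for $p\neq q$: the deterministic part is $O_t(1)$ by partial summation against $\zeta(1+2it)\neq 0$, while the $\lambda_f(p^2)$-part has $2n$th moment $O_n(1)$ since each pairing contributes $\sum_p 1/p^2=O(1)$. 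The integral piece is bounded by Cauchy--Schwarz, pairing the $4n$th moment of $(\sigma_x-1/2)x^{\sigma_x-1/2}$ from Lemma \ref{lemma 6} (which is $O((\log q)^{-4n})$) against the $4n$th moment of the inner Dirichlet polynomial; after a further application of Jensen to the probability measure $(\log x/2)x^{1/2-\sigma}\mathrm{d}\sigma$ on $[1/2,\infty)$ and the pairing bound $\sum_f\omega_f|D_\sigma(f)|^{4n}\ll(\log q)^{8n}$ (stemming from $\sum_p(\log p)^2/p\ll(\log q)^2$ via \eqref{classical bound 2}), the entire piece is $O(1)$. The residual pieces $(\sigma_x-1/2)\log(|t|+q)$ and $O(1)$ are immediate from Lemma \ref{lemma 6} and Cauchy--Schwarz.

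The main obstacle is the combinatorial bookkeeping for the Hecke expansion of $\prod_j\lambda_f(p_j)$ in the $n$th moment of $M(t,f)$: one must verify that exactly the perfect pairings give the claimed main term and that every other even-multiplicity configuration contributes only to lower orders of $\log\log q$. A secondary but technically important requirement is to choose $\delta$ in $x=q^{\delta/3}$ uniformly small enough in $n$ so that simultaneously (i) the Kloosterman error from Lemma \ref{corollary} stays below the main term in \eqref{M(t,f) moment}, (ii) the constraint $0<\delta<6c/(8n+3A+3)$ of Lemma \ref{lemma 6} is met, and (iii) all prime-sum moment bounds for the pieces of $R(t,f)$ remain uniform in $f$.
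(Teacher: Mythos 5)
Your proposal is correct and follows essentially the same route as the paper: the $n$-th moment of $M(t,f)$ is computed by multinomial expansion, Hecke relations and Lemma \ref{corollary}, with the diagonal dominated by perfect opposite-sign pairings (your per-pair collapse to $p^{-1-2it}+p^{-1+2it}-2p^{-1}$ is just a repackaging of the paper's Cases I--IV), and the $2n$-th moment of $R(t,f)$ is bounded piecewise from Lemma \ref{lemma 5} via Cauchy--Schwarz against Lemma \ref{lemma 6} and a Jensen/H\"older step on the measure $x^{1/2-\sigma}\,\mathrm{d}\sigma$, exactly as in the paper.
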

\begin{proof}
Recall that
\bna
M(t,f)=\frac{1}{\pi}\Im\sum_{p\leq x^3}\frac{C_f(p)}{p^{1/2+it}}=
\frac{-i}{2\pi}\left(\sum_{p\leq x^3}\frac{\lambda_f(p)}{p^{1/2+it}}
-\sum_{p\leq x^3}\frac{\lambda_f(p)}{p^{1/2-it}}\right).
\ena
Set $x=q^{\delta/3}$ for a suitably small $\delta>0$ (to be specified later). Thus we have
\bna
M(t,f)^n=\frac{(-i)^n}{(2\pi)^n}\left(\sum_{p\leq q^{\delta}}\frac{\lambda_f(p)}{p^{1/2+it}}
-\sum_{p\leq q^{\delta}}\frac{\lambda_f(p)}{p^{1/2-it}}\right)^n.
\ena
A general term in the expansion of the above has the form
\bea\label{general term}
\frac{\lambda_f(p_1)^{m(p_1)+n(p_1)}}{p_1^{m(p_1)(1/2+it)}(-1)^{n(p_1)}p_1^{n(p_1)(1/2-it)}}
\times\cdots\times
\frac{\lambda_f(p_r)^{m(p_r)+n(p_r)}}{p_r^{m(p_r)(1/2+it)}(-1)^{n(p_r)}p_r^{n(p_r)(1/2-it)}},
\eea
where $p_1<p_2<\cdots<p_r\leq q^\delta$, $m(p_j)+n(p_j)\geq 1$, and $\sum_{j=1}^r(m(p_j)+n(p_j))=n$.

Now we discuss the contribution from the general term \eqref{general term} in the following cases.

Case\,(\uppercase\expandafter{\romannumeral 1}):
\emph{In the general term \eqref{general term}, $m(p_{j_0})\not\equiv n(p_{j_0})\bmod2$ for some $j_0$.}
With the Hecke relation \eqref{the Hecke relation}, the numerator of this general term \eqref{general term}
can be written as a sum whose terms are all of the form $\lambda_f(m)\lambda_f(n)$ where $m\neq n$.
Since for these terms $\delta_{m,n}=0$, we see from Lemma \ref{corollary} that the contribution from the general term
\eqref{general term} to the moment \eqref{M(t,f) moment} is
\begin{equation*}
\begin{split}
&\ll_n\sum_{p_1<p_2<\cdots<p_r\leq q^\delta}p_1^{-\frac{1}{2}(m(p_1)+n(p_1))}\cdots
p_r^{-\frac{1}{2}(m(p_r)+n(p_r))}q^{-\frac{3}{2}+\frac{\delta n}{2}(1+\varepsilon)}\\
&\ll_n\bigg(\sum_{p\leq q^\delta}p^{-\frac{1}{2}}\bigg)^rq^{-\frac{3}{2}+\frac{\delta n}{2}(1+\varepsilon)}
\ll_nq^{-\frac{3}{2}+\frac{\delta n}{2}(2+\varepsilon)},
\end{split}
\end{equation*}
which is negligible by taking $0<\delta<\frac{1}{n}$.

Case\,(\uppercase\expandafter{\romannumeral 2}):
\emph{In the general term \eqref{general term}, $m(p_{j})\equiv n(p_{j})\bmod2$ for all $j$,
and $m(p_{j_0})+n(p_{j_0})\geq4$ for some $j_0$.}
In this case, $n$ is even and $n>2r$, so $r<\frac{n}{2}$.
By using Hecke relations \eqref{the Hecke relation},
one sees that it is possible to have resulting terms $\lambda_f(m)\lambda_f(n)$ with $m=n$.
Using Lemma \ref{corollary}, the contribution to the moment \eqref{M(t,f) moment} such terms is bounded by
the diagonal term of size $\asymp 1$. The contribution from the off-diagonal terms is clearly negligible compared
to $1$, given that $0<\delta<\frac{1}{n}$.
Combining the bound \eqref{Deligne bound} and \eqref{classical bound 1},
we have the contribution from this general term to the moment \eqref{M(t,f) moment} is
\begin{equation*}
\begin{split}
&\ll_n\sum_{p_1<p_2<\cdots<p_r\leq q^\delta}p_1^{-\frac{1}{2}(m(p_1)+n(p_1))}\cdots
p_r^{-\frac{1}{2}(m(p_r)+n(p_r))}\\
&\ll_n\bigg(\sum_{p\leq q^\delta}p^{-1}\bigg)^r
\ll_n (\log\log q)^{\frac{n}{2}-1}.
\end{split}
\end{equation*}

It remains to discuss the following: In the general term \eqref{general term}, $m(p_{j})\equiv n(p_{j})\bmod2$
and $m(p_j)+n(p_j)=2$ for all $j$. Clearly $n$ must be even, say $n=2m$ and so $r=m$.

Case\,(\uppercase\expandafter{\romannumeral 3}):
\emph{$m(p_{j_0})=2$ or $n(p_{j_0})=2$ for some $j_0$.}
Under the assumption $0<\delta<\frac{1}{n}$, we deduce that the contribution from these terms to \eqref{M(t,f) moment} is
at most $O\big((\log\log q)^{\frac{n}{2}-1}\big)$ by using \eqref{Deligne bound}
and the convergence of $\sum_{p}p^{-1-it}$ for $t\neq 0$.

Case\,(\uppercase\expandafter{\romannumeral 4}):
\emph{$m(p_{j})=n(p_{j})=1$ for all $j$.}
The contribution from these terms to \eqref{M(t,f) moment} is
\begin{equation*}
\begin{split}
\sum_{f\in H_2(q)}\frac{1}{4\pi \left<f,f\right>}\binom{2m}{m}(m!)(m!)\frac{(-1)^m(-i)^{2m}}{(2\pi)^{2m}}
\sum_{p_1<p_2<\cdots<p_m\leq q^\delta}\frac{\lambda_f(p_1)^2\cdots\lambda_f(p_m)^2}{p_1\cdots p_m}.
\end{split}
\end{equation*}
By the Hecke relations, we have $\lambda_f(p_1)^2\cdots\lambda_f(p_m)^2=\lambda_f(p_1\cdots p_m)\lambda_f(p_1\cdots p_m)$.
Applying Lemma \ref{corollary} and the assumption $0<\delta<\frac{1}{n}$, we have the above equals
\begin{equation*}
\begin{split}
&\frac{(2m)!}{(2\pi)^{2m}}\sum_{p_1<p_2<\cdots<p_m\leq q^\delta}\frac{1}{p_1\cdots p_m}
\cdot\left(1+O\left(q^{-3/2}(p_1\cdots p_m)(\tau (p_1\cdots p_m))\right)\right)\\
=&\frac{(2m)!}{m!(2\pi)^{2m}}\sum_{p_1,p_2,\ldots,p_m\leq q^\delta\atop p_j \,\text{distinct}}\frac{1}{p_1\cdots p_m}
+O\bigg(q^{-3/2}\sum_{p_1,p_2,\ldots,p_m\leq q^\delta\atop p_j\,\text{distinct}}1\bigg)\\
=&\frac{(2m)!}{m!(2\pi)^{2m}}(\log\log q)^m
+O_m\big((\log\log q)^{m-1}\big).
\end{split}
\end{equation*}
Now the asymptotic formula \eqref{M(t,f) moment} follows from
Cases (\uppercase\expandafter{\romannumeral 1})--(\uppercase\expandafter{\romannumeral 4})
under the assumption $0<\delta<\frac{1}{n}$.

Next we are ready to prove \eqref{R(t,f) moment}.
Recall that $\sigma_x=\sigma_{x,f}$ depending on $f$.
By Lemma \ref{lemma 5},
\begin{equation}\label{R}
\begin{split}
&\sum_{f\in H_2(q)}\frac{|R(t,f)|^{2n}}{4\pi \left<f,f\right>}\\
\ll&\sum_{f\in H_2(q)}\frac{1}{4\pi \left<f,f\right>}\left|\Im\sum_{p\leq x^3}
\frac{C_f(p)(\Lambda_x(p)-\Lambda(p))}{p^{1/2+it}\log p}\right|^{2n}
+\sum_{f\in H_2(q)}\frac{1}{4\pi \left<f,f\right>}
\left|\Im\sum_{p\leq x^{3/2}}\frac{C_f(p^2)\Lambda_x(p^2)}{p^{1+2it}\log p}\right|^{2n}\\
&+\sum_{f\in H_2(q)}\frac{(\sigma_x-1/2)^{2n}x^{2n(\sigma_x-1/2)}}{4\pi \left<f,f\right>}\left(\int_{1/2}^\infty
x^{1/2-\sigma}\left|\sum_{p\leq x^3}
\frac{C_f(p)\Lambda_x(p)\log(xp)}{p^{\sigma+it}}\right|\mathrm{d}\sigma\right)^{2n}\\
&+\sum_{f\in H_2(q)}\frac{(\sigma_x-1/2)^{2n}}{4\pi \left<f,f\right>}\big(\log(|t|+q)\big)^{2n}
+\sum_{f\in H_2(q)}\frac{1}{4\pi \left<f,f\right>}.
\end{split}
\end{equation}
Since
\bna
|\Lambda_x(p)-\Lambda(p)|=O\left(\frac{\log^3p}{\log^2x}\right)\quad\text{and}\quad
C_f(p^2)=\lambda_f(p^2)-\varepsilon_q(p),
\ena
the first two terms are of $O(1)$ by the argument as \eqref{M(t,f) moment}.
The last two terms are of $O(1)$ by Lemma \ref{lemma 6} and Lemma \ref{corollary}.
For the third term,
it follows from Cauchy's inequality that
\begin{equation*}
\begin{split}
&\sum_{f\in H_2(q)}\frac{(\sigma_x-1/2)^{2n}x^{2n(\sigma_x-1/2)}}{4\pi \left<f,f\right>}\left(\int_{1/2}^\infty
x^{1/2-\sigma}\left|\sum_{p\leq x^3}
\frac{C_f(p)\Lambda_x(p)\log(xp)}{p^{\sigma+it}}\right|\mathrm{d}\sigma\right)^{2n}\\
&\leq \left(\sum_{f\in H_2(q)}\frac{(\sigma_x-1/2)^{4n}x^{4n(\sigma_x-1/2)}}{4\pi \left<f,f\right>}\right)^{1/2}\\
&\times\left(\sum_{f\in H_2(q)}\frac{1}{4\pi \left<f,f\right>}\left(\int_{1/2}^\infty
x^{1/2-\sigma}\left|\sum_{p\leq x^3}
\frac{C_f(p)\Lambda_x(p)\log(xp)}{p^{\sigma+it}}\right|\mathrm{d}\sigma\right)^{4n}\right)^{1/2}.
\end{split}
\end{equation*}
By H\"{o}lder's inequality with the exponents $4n/(4n-1)$ and $4n$,
\begin{equation*}
\begin{split}
&\left(\int_{1/2}^\infty
x^{1/2-\sigma}\left|\sum_{p\leq x^3}
\frac{C_f(p)\Lambda_x(p)\log(xp)}{p^{\sigma+it}}\right|\mathrm{d}\sigma\right)^{4n}\\
\leq&\left(\int_{1/2}^\infty
x^{1/2-\sigma}\mathrm{d}\sigma\right)^{4n-1}\left(\int_{1/2}^\infty
x^{1/2-\sigma}\left|\sum_{p\leq x^3}
\frac{C_f(p)\Lambda_x(p)\log(xp)}{p^{\sigma+it}}\right|^{4n}\mathrm{d}\sigma\right)\\
=&\frac{1}{(\log x)^{4n-1}}\int_{1/2}^\infty
x^{1/2-\sigma}\left|\sum_{p\leq x^3}
\frac{C_f(p)\Lambda_x(p)\log(xp)}{p^{\sigma+it}}\right|^{4n}\mathrm{d}\sigma.
\end{split}
\end{equation*}
Moreover, using the same argument as in proving \eqref{M(t,f) moment},
we have
\begin{equation*}
\begin{split}
&\sum_{f\in H_2(q)}\frac{1}{4\pi \left<f,f\right>}\left|\sum_{p\leq x^3}
\frac{C_f(p)\Lambda_x(p)\log(xp)}{p^{\sigma+it}}\right|^{4n}\\
\ll&\left(\sum_{p\leq x^3}
\frac{\left|\Lambda_x(p)\log(xp)p^{1/2-\sigma}\right|^2}{p}\right)^{2n}
\ll \left(\sum_{p\leq x^3}
\frac{(\log p)^2}{p}(\log x)^2\right)^{2n}
\ll(\log x)^{8n}.
\end{split}
\end{equation*}
Thus,
\begin{equation*}
\begin{split}
&\sum_{f\in H_2(q)}\frac{1}{4\pi \left<f,f\right>}\left(\int_{1/2}^\infty
x^{1/2-\sigma}\left|\sum_{p\leq x^3}
\frac{C_f(p)\Lambda_x(p)\log(xp)}{p^{\sigma+it}}\right|\mathrm{d}\sigma\right)^{4n}\\
\ll &\frac{1}{(\log x)^{4n-1}}\int_{1/2}^\infty
x^{1/2-\sigma}(\log x)^{8n}\mathrm{d}\sigma\ll (\log x)^{4n}.
\end{split}
\end{equation*}
Using this bound and Lemma \ref{lemma 6}, we have the third term in \eqref{R}
is of $O(1)$.
Thus we complete the proof of proposition.
\end{proof}

\noindent\textbf{Proof of Theorem \ref{main-theorem}.}
By the binomial theorem, we have
\begin{equation*}
\begin{split}
S(t,f)^n=M(t,f)^n+O_n\left(\sum_{\ell=1}^n|M(t,f)|^{n-\ell}|R(t,f)|^{\ell}\right).
\end{split}
\end{equation*}
For $1\leq\ell\leq n$, we apply the generalized H\"{o}lder's inequality and Proposition \ref{main propsition}, and get
\begin{equation*}
\begin{split}
&\sum_{f\in H_2(q)}\frac{1}{4\pi \left<f,f\right>}|M(t,f)|^{n-\ell}|R(t,f)|^{\ell}\\
\ll &\left(\sum_{f\in H_2(q)}\frac{1}{4\pi \left<f,f\right>}\right)^{\frac{1}{2}}
\left(\sum_{f\in H_2(q)}\frac{|M(t,f)|^{2n}}{4\pi \left<f,f\right>}\right)^{\frac{n-\ell}{2n}}
\left(\sum_{f\in H_2(q)}\frac{|R(t,f)|^{2n}}{4\pi \left<f,f\right>}\right)^{\frac{\ell}{2n}}\\
\ll&_{t,n}(\log\log q)^{(n-\ell)/2}\ll_{t,n}(\log\log q)^{(n-1)/2}.
\end{split}
\end{equation*}
\\
\noindent\textbf{Proof of Theorem \ref{main-corollary}.}
The $n$-th moment of $\mu_q$ is
\begin{align*}
\int_{\mathbb{R}}\xi^n\mathrm{d}\mu_q=\left(
\sum_{f\in H_2(q)}\omega_f\cdot\left(\frac{S(t,f)}{\sqrt{\log\log q}}\right)^n\right)\bigg/
\left(\sum_{f\in H_2(q)}\omega_f\right).
\end{align*}
Using Theorem \ref{main-theorem} and Lemma \ref{corollary}, we obtain that for all $n$,
\bna
\lim_{q\rightarrow \infty}\int_{\mathbb{R}}\xi^n\mathrm{d}\mu_q=C_n=
\sqrt{\pi}\int_{\mathbb{R}}\xi^n \exp(-\pi^2\xi^2)\mathrm{d}\xi.
\ena
By the theory of moments in probability theory (see Lemma \ref{probability theory}),
we complete the proof of theorem.

\bigskip
\noindent{\bf Acknowledgements}
H. Wang would like to thank the Alfr\'{e}d R\'{e}nyi Institute of Mathematics for providing a great working environment
and the China Scholarship Council for its financial support.
The authors are very grateful to the
referees for their valuable suggestions.

\bibliographystyle{amsplain}

\end{document}